\newtheorem{theorem}{Theorem}[section]
\newtheorem{lemma}[theorem]{Lemma}
\newtheorem{proposition}[theorem]{Proposition}
\newtheorem{corollary}[theorem]{Corollary}
\theoremstyle{definition}
\newtheorem{definition}[theorem]{Definition}
\newtheorem{example}[theorem]{Example}
\theoremstyle{remark}
\newtheorem{remark}[theorem]{Remark}
\numberwithin{equation}{section}
\begin{document}
	
	\setcounter{page}{1}
	
	\title[Modular Biframes for Operators]{Modular Biframes for Operators}
	
	\author[S. E. Oustani, M. Rossafi]{Salah Eddine Oustani$^{1}$ and Mohamed Rossafi$^2$$^{*}$}
	
	\address{$^{1}$Department of Mathematics Faculty of Sciences, University of Ibn Tofail, B.P. 133, Kenitra, Morocco}
	\email{\textcolor[rgb]{0.00,0.00,0.84}{salaheddine.oustani.1975@gmail.com}}
	\address{$^{2}$Department of Mathematics Faculty of Sciences, Dhar El Mahraz University Sidi Mohamed Ben Abdellah, Fes, Morocco}
	\email{\textcolor[rgb]{0.00,0.00,0.84}{rossafimohamed@gmail.com}}
	
	\subjclass[2020]{42C15, 47A05, 47A15.}
	
	\keywords{$K$-biframes, semi-regular operator, EP operator.}
	
	\date{Received: xxxxxx; Revised: yyyyyy; Accepted: zzzzzz.
		\newline \indent $^{*}$ Corresponding author}

	\begin{abstract}
One of the most important problems in the studying of frames and its extensions is the invariance of these systems under perturbation. The current paper is concerned with  the invariance of Modular biframes for operators  under some class of closed range operators.
	\end{abstract}
	\maketitle
	
	\baselineskip=12.4pt
	
\section{Introduction and Preliminaries  }
	
Frames are basis-like systems that span a vector space but allow for linear dependency, that can be used to  obtain other desirable features unavailable with orthonormal bases.
Theory of frames is a useful tool to expand functions with respect to a system of functions which is, in general, non-orthogonal and overcomplete.
The aim of this theory, developed by Duffin
and Schaeffer \cite{Duff}, was to solve some problems related to the nonharmonic Fourier series. However, the frame theory had not attracted much attention until the celebrated work by Daubechies, Crossman, and Meyer \cite{DG}. Nowaday, Frames attract a steady interests in recent research in applied mathematics because they are used in various areas such as signal processing \cite{FF}, sampling theory \cite{EL}. 
The theory of frames has been rapidly generalized  and, until 2005,  various generalizations consisting of vectors in Hilbert spaces or Hilbert $C^{\ast}$-modules have been developed. 

It is well known that Hilbert $\mathcal{C}^{\ast}$-module is an object like a Hilbert space except that the inner product is not scalar-valued but takes its values in a $\mathcal{C}^{\ast}$-algebra of coefficients. Since the geometry of these modules emerges from the $ \mathcal{C}^{\ast} $-valued inner product, some basic properties of Hilbert spaces like self-duality  must be given up. These modules play an important role in the study of non-commutative geometry, locally compact quantum groups and dynamical systems. For more information, we refer the readers to \cite{Assila, Ghiati, DG, Ch, FL, MANUI, Massit, Gav, Rossafi}.

The main goal of this paper is to study the invariance of $K$-biframes under some closed range operators in Hilbert $\mathcal{C}^{\ast}$-modules.

 In continue,  we aim to review some topics and basic definitions about frames for Hilbert $\mathcal{C}^{\ast}$-module  and operator theory that will be needed later. The reader is referred for instance to \cite{ROssafi, Ch} for more information.
		
\begin{definition}\cite{MANUI}.
	Let $ \mathcal{A} $ be a unital $C^{\ast}$-algebra and $\mathcal{H}$ be a left $ \mathcal{A} $-module, such that the linear structures of $\mathcal{A}$ and $ \mathcal{H} $ are compatible. $\mathcal{H}$ is a pre-Hilbert $\mathcal{A}$-module if $\mathcal{H}$ is equipped with an $\mathcal{A}$-valued inner product $\langle.,.\rangle_{\mathcal{A}} :\mathcal{H}\times\mathcal{H}\rightarrow\mathcal{A}$, such that is sesquilinear, positive definite and respects the module action. In the other words,
	\begin{itemize}
		\item [(i)] $ \langle x,x\rangle_{\mathcal{A}}\geq0 $ for all $ x\in\mathcal{H} $ and $ \langle x,x\rangle_{\mathcal{A}}=0$ if and only if $x=0$.
		\item [(ii)] $\langle ax+y,z\rangle_{\mathcal{A}}=a\langle x,y\rangle_{\mathcal{A}}+\langle y,z\rangle_{\mathcal{A}}$ for all $a\in\mathcal{A}$ and $x,y,z\in\mathcal{H}$.
		\item[(iii)] $ \langle x,y\rangle_{\mathcal{A}}=\langle y,x\rangle_{\mathcal{A}}^{\ast} $ for all $x,y\in\mathcal{H}$.
	\end{itemize}	 
\end{definition}
For $x\in\mathcal{H}, $ we define $||x||=||\langle x,x\rangle||^{\frac{1}{2}}$. If $\mathcal{H}$ is complete with $||.||$, it is called a Hilbert $\mathcal{A}$-module or a Hilbert $C^{\ast}$-module over $\mathcal{A}$. For every $a$ in $C^{\ast}$-algebra $\mathcal{A}$, we have $|a|=(a^{\ast}a)^{\frac{1}{2}}$ and the $\mathcal{A}$-valued norm on $\mathcal{H}$ is defined by $|x|=\langle x, x\rangle^{\frac{1}{2}}$ for $x\in\mathcal{H}$.

Let $\mathcal{H}$ and $\mathcal{K}$ be two Hilbert $\mathcal{A}$-modules, A map $T:\mathcal{H}\rightarrow\mathcal{K}$ is said to be adjointable if there exists a map $T^{\ast}:\mathcal{K}\rightarrow\mathcal{H}$ such that $\langle Tx,y\rangle_{\mathcal{A}}=\langle x,T^{\ast}y\rangle_{\mathcal{A}}$ for all $x\in\mathcal{H}$ and $y\in\mathcal{K}$.

We also reserve the notation $End_{\mathcal{A}}^{\ast}(\mathcal{H},\mathcal{K})$ for the set of all adjointable operators from $\mathcal{H}$ to $\mathcal{K}$ and $End_{\mathcal{A}}^{\ast}(\mathcal{H},\mathcal{H})$ is abbreviated to $End_{\mathcal{A}}^{\ast}(\mathcal{H})$.

\begin{example} 
Let us consider the following set
$$l^{2}\left( \mathcal{A}  \right)=\{\{a_{j}\}_{j \in \mathbb{J}}\subseteq \mathcal{A}: \sum_{j\in \mathbb{J} }a_{j}a_{j}^{\ast} \,\, converge\,\, in\, \mid\mid .\mid\mid_{\mathcal{A}}    \}.$$
It is easy to see that $l^{2}\left( \mathcal{A}  \right)$ with pointwise operations and the inner product
$$   \langle \{a_{j}\},\{b_{j}\}\rangle = \sum_{j\in \mathbb{J}}a_{j}b_{j}^{\ast} ,       $$
is a Hilbert $ \mathcal{C}^{\ast}$-module which is called the standard Hilbert $ \mathcal{C}^{\ast}$-module over $\mathcal{A}$.
\end{example}
A biframe is a pair of sequences in a Hilbert space that
applies to an inequality similar to a frame inequality.
\begin{definition} \cite{Alijani biframe}
A biframe is a pair $\left( \{ x_{j}\}_{j\in \mathbb{J}},  \{ y_{j}\}_{j\in \mathbb{J}}\right)$ of sequences in a Hilbert space such there exist $\alpha, \beta >0$
$$  \alpha \mid \mid x \mid \mid ^{2} \leq   \sum_{j\in \mathbb{J}}\langle x, x_{j}\rangle\langle y_{j},  x\rangle\leq \beta \mid \mid x \mid \mid ^{2}, \, for\, all     \ x \in \mathcal{H}. $$          
\end{definition}

\begin{example} \cite{Alijani biframe}
We consider two following sequences 
$$\{x_{j}\}_{1\leq j \leq 2}=\{(1, 0), (0, 1)\} \,\, and \,\, \{y_{j}\}_{1\leq j \leq 2}=\{(3, 1), (1, 1) \}$$ 
$\left( \{ x_{j}\}_{1\leq j \leq 2  },  \{ y_{j}\}_{1\leq j \leq 2 }\right)$  is a biframe  with bounds $\dfrac{1}{2}$ and 3.			
 \end{example}
Recentely, M. Rossafi introduced the concept of biframes in Hilbert $\mathcal{C}^{\ast}$-modules as a new generalization of modular frames.
	
\begin{definition} \cite{karara  }
Let $\mathcal{H} $ be a Hilbert $\mathcal{A}$-module. A sequence $\left( \{ x_{j}\}_{j\in \mathbb{J}},  \{ y_{j}\}_{j\in \mathbb{J}}\right)$   is said to be a biframe for $\mathcal{H}$, if there exist $\alpha,\beta >0 $ such that
\begin{equation*}
\alpha\langle x, x\rangle \leq \sum_{j\in \mathbb{J}}\langle x, x_{j}\rangle\langle y_{j}, x\rangle\leq \beta\langle x, x\rangle, \, for\, all     \ x \in \mathcal{H}.
\end{equation*}
\end{definition}
For an operator $T\in \mathcal{L}(\mathcal{H}), $ we denote by $R\left( T\right) $ and $N\left( T\right) $ the range and the kernel subspaces of $T$.  We denote by $\mathcal{CR}\left( \mathcal{H}\right) $ the set of all close range operators on $\mathcal{H}$ and $I$ is the identity operator.   As usual, for $E \subset \mathcal{H},$ the orthogonal projection on $E$ is denoted by $\pi_{E}$. We also write $R^{\infty}\left( T\right) = \bigcap_{n\geq 0}R\left( T^{n}\right),$ for the generalized range.
	
The following lemma is a key tool for the proofs of our main results.
		
\begin{lemma} \cite{Pas}\label{lem1.1}
Let $\mathcal{H}$ be Hilbert $\mathcal{A} $-module and $T \in {\mathcal{L}}\left( \mathcal{H}\right). $  Then 
\begin{equation*}
\langle Tx,Tx\rangle \leq \mid\mid T\mid\mid^{2} \langle x, x\rangle, \, for\, all\   x\in \mathcal{H}.
\end{equation*}
			
\end{lemma}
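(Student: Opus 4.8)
The plan is to reduce the stated module-valued inequality to the order structure of the $C^{\ast}$-algebra $End_{\mathcal{A}}^{\ast}(\mathcal{H}) = \mathcal{L}(\mathcal{H})$. First I would rewrite the left-hand side using adjointability: since $\langle Tx, y\rangle = \langle x, T^{\ast}y\rangle$ for all $x,y$, taking $y = Tx$ gives
\begin{equation*}
\langle Tx, Tx\rangle = \langle x, T^{\ast}Tx\rangle, \qquad x \in \mathcal{H}.
\end{equation*}
Thus it suffices to prove that $\langle x, T^{\ast}Tx\rangle \leq \|T\|^{2}\langle x, x\rangle$ for every $x \in \mathcal{H}$.

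Next I would exploit that $P := T^{\ast}T$ is a positive element of the unital $C^{\ast}$-algebra $\mathcal{L}(\mathcal{H})$: it is self-adjoint, and the $C^{\ast}$-identity yields $\|P\| = \|T^{\ast}T\| = \|T\|^{2}$. The key algebraic step is the standard functional-calculus fact that a positive element $P$ of a unital $C^{\ast}$-algebra satisfies $P \leq \|P\|\,I$, because the function $t \mapsto \|P\| - t$ is nonnegative on the spectrum $\sigma(P) \subseteq [0, \|P\|]$. Applying this to $P = T^{\ast}T$ produces the operator inequality $\|T\|^{2}I - T^{\ast}T \geq 0$ in $\mathcal{L}(\mathcal{H})$.

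Finally I would translate this operator positivity back into the desired module inequality. Since $\|T\|^{2}I - T^{\ast}T$ is positive, it can be written as $S^{\ast}S$ for some $S \in \mathcal{L}(\mathcal{H})$, and then for every $x \in \mathcal{H}$,
\begin{equation*}
\|T\|^{2}\langle x, x\rangle - \langle x, T^{\ast}Tx\rangle = \langle x, (\|T\|^{2}I - T^{\ast}T)x\rangle = \langle Sx, Sx\rangle \geq 0,
\end{equation*}
where the last inequality is axiom (i) of the $\mathcal{A}$-valued inner product. Combining this with the first display gives $\langle Tx, Tx\rangle \leq \|T\|^{2}\langle x, x\rangle$, as claimed.

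The main obstacle is the functional-calculus step $P \leq \|P\|\,I$; everything else is bookkeeping with the adjoint relation and the defining positivity axiom. One must be careful that ``$\leq$'' here denotes the order inherited from positivity in $\mathcal{L}(\mathcal{H})$ (that is, $R - Q \geq 0$ meaning $R - Q = A^{\ast}A$ for some $A$), and that this order is compatible with the $\mathcal{A}$-valued pairing precisely through the identity $\langle x, A^{\ast}Ax\rangle = \langle Ax, Ax\rangle \geq 0$.
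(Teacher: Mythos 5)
Your proof is correct. Note, however, that the paper does not prove this lemma at all: it is quoted verbatim from Paschke's paper \cite{Pas}, so there is no in-paper argument to compare against. Your route is the standard $C^{\ast}$-algebraic one for \emph{adjointable} operators: pass to $P = T^{\ast}T$ in the unital $C^{\ast}$-algebra $\mathcal{L}(\mathcal{H})$, use the spectral fact $P \leq \|P\|\,I = \|T\|^{2} I$, extract a square root $S$ of $\|T\|^{2}I - T^{\ast}T$, and push positivity through the pairing via $\langle x, S^{\ast}Sx\rangle = \langle Sx, Sx\rangle \geq 0$. All steps are sound, including the two facts you lean on implicitly: that $\mathcal{L}(\mathcal{H})$ (the adjointable operators, which is what the paper means despite the loose notation) is a unital $C^{\ast}$-algebra satisfying the $C^{\ast}$-identity, and that positive elements are exactly those of the form $A^{\ast}A$. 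It is worth knowing that Paschke's original result is strictly stronger: it holds for any bounded $\mathcal{A}$-linear map, with no adjoint assumed, and its proof must therefore avoid $T^{\ast}$ entirely; the usual trick is to normalize $x_{n} = x\bigl(\langle x,x\rangle + \tfrac{1}{n}\bigr)^{-1/2}$, observe $\langle Tx_{n}, Tx_{n}\rangle \leq \|T\|^{2}\cdot 1$ from the norm bound and the scalar inequality $a \leq \|a\|\cdot 1$, and then recover the inequality for $x$ by multiplying back and letting $n \to \infty$. Your argument buys simplicity and transparency at the cost of requiring adjointability, which is harmless here since every operator the paper applies this lemma to lies in $\mathcal{L}(\mathcal{H})$.
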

		
\begin{lemma} \cite{decom} \label{lem1.2}
Let $ T,G \in \mathcal{L}\left( \mathcal{H}\right) $
such that $ R(G)$ is closed. Then the following statements are equivalent:
\begin{itemize}
\item[1.] $R(T)\subseteq R(G);$
\item[2.] $\alpha \langle T^{\ast}x, T^{\ast}x\rangle\leq  \langle G^{\ast}x, G^{\ast}x\rangle$, for some $\alpha>0 $.
\end{itemize}
\end{lemma}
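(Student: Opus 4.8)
The plan is to recognise this statement as the Hilbert $C^{\ast}$-module analogue of the classical Douglas majorization lemma and to prove the two implications separately, using the closedness of $R(G)$ as the single structural hypothesis that rescues the argument from the failure of orthogonal complementation in general modules. The first fact I would record is that, since $G \in \mathcal{L}(\mathcal{H})$ has closed range, so does $G^{\ast}$; consequently both $R(G)$ and $R(G^{\ast}) = N(G)^{\perp}$ are orthogonally complemented, we obtain the decomposition $\mathcal{H} = R(G) \oplus N(G^{\ast})$ together with $R(G) = N(G^{\ast})^{\perp}$, and the Moore--Penrose inverse $G^{\dagger} \in \mathcal{L}(\mathcal{H})$ exists and satisfies $GG^{\dagger} = \pi_{R(G)}$. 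These are the only module-specific facts I will need, and they are exactly what closedness of the range buys us.

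For the implication $(1)\Rightarrow(2)$ I would turn the inclusion $R(T) \subseteq R(G)$ into a factorization. Since $GG^{\dagger} = \pi_{R(G)}$ acts as the identity on $R(G)$ and $R(T) \subseteq R(G)$, we get $\pi_{R(G)}T = T$, hence $T = G(G^{\dagger}T)$. Setting $D := G^{\dagger}T \in \mathcal{L}(\mathcal{H})$ gives $T^{\ast} = D^{\ast}G^{\ast}$, and then Lemma \ref{lem1.1} applied to the operator $D^{\ast}$ and the vector $G^{\ast}x$ yields
\begin{equation*}
\langle T^{\ast}x, T^{\ast}x\rangle = \langle D^{\ast}G^{\ast}x, D^{\ast}G^{\ast}x\rangle \leq \|D\|^{2}\,\langle G^{\ast}x, G^{\ast}x\rangle, \quad x \in \mathcal{H}.
\end{equation*}
Choosing $\alpha = \|D\|^{-2}$ (the case $D=0$ forces $T=0$ and is trivial) delivers the desired inequality.

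For the converse $(2)\Rightarrow(1)$ I would avoid constructing an intermediate operator and argue directly with kernels and orthogonal complements. The inequality $\alpha\langle T^{\ast}x, T^{\ast}x\rangle \leq \langle G^{\ast}x, G^{\ast}x\rangle$ immediately gives $N(G^{\ast}) \subseteq N(T^{\ast})$: if $G^{\ast}x = 0$ then the right-hand side vanishes, forcing $\langle T^{\ast}x, T^{\ast}x\rangle = 0$ and hence $T^{\ast}x = 0$. Now for any $y \in \mathcal{H}$ and any $u \in N(G^{\ast})$ we have $\langle Ty, u\rangle = \langle y, T^{\ast}u\rangle = 0$, so $R(T) \perp N(G^{\ast})$; since $R(G)$ is closed I may invoke $R(G) = N(G^{\ast})^{\perp}$ to conclude $R(T) \subseteq R(G)$.

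The step I expect to be the main obstacle is not the algebra but the justification of the structural facts in the first paragraph: in a general Hilbert $C^{\ast}$-module a closed submodule need not be orthogonally complemented, and the Hilbert-space Douglas proof (which picks a canonical preimage in $N(G)^{\perp}$ and invokes the closed graph theorem) does not transfer verbatim. The entire argument hinges on the fact that closedness of $R(G)$ forces $R(G)$ and $R(G^{\ast})$ to be complemented and guarantees the existence of the adjointable Moore--Penrose inverse $G^{\dagger}$; once that is in hand, both implications reduce to the short computations above.
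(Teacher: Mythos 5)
The paper never proves this lemma: it is imported verbatim from \cite{decom} with no argument supplied, so there is no internal proof to compare yours against, and I can only judge your proposal on its own merits --- it is correct and complete. The forward direction is the standard Douglas-type factorization: $R(T)\subseteq R(G)$ and $GG^{\dagger}=\pi_{R(G)}$ give $T=G(G^{\dagger}T)$, hence $T^{\ast}=(G^{\dagger}T)^{\ast}G^{\ast}$, and Lemma \ref{lem1.1} applied to $(G^{\dagger}T)^{\ast}$ at the vector $G^{\ast}x$ yields the majorization with $\alpha=\Vert G^{\dagger}T\Vert^{-2}$ (the case $G^{\dagger}T=0$ being trivial, as you note). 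The converse is where your route is genuinely cleaner than the classical Hilbert-space Douglas argument: the inequality forces $N(G^{\ast})\subseteq N(T^{\ast})$, hence $R(T)\perp N(G^{\ast})$, and the identity $N(G^{\ast})^{\perp}=R(G)$ --- valid precisely because closedness of $R(G)$ gives the orthogonal decomposition $\mathcal{H}=R(G)\oplus N(G^{\ast})$ --- finishes the proof without constructing any intermediate operator, extending by continuity, or invoking a closed graph theorem. Two points you should make explicit in a written version: first, the structural facts in your opening paragraph (closed range of $G$ implies closed range of $G^{\ast}$, orthogonal complementability of $R(G)$, and existence of an adjointable Moore--Penrose inverse with $GG^{\dagger}=\pi_{R(G)}$) are nontrivial theorems in the module setting and need citations (they are Lance's closed-range theorem and the construction in \cite{XQS}, which the paper already references); second, the equality $R(G)^{\perp}=N(G^{\ast})$ used implicitly in identifying $N(G^{\ast})^{\perp}$ with $R(G)$ holds for any adjointable operator and is worth one line. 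Neither is a gap; the proof stands as a legitimate self-contained replacement for the external citation.
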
	
It should be noted that the closeness of range of operators is an attractive problem which appears in operator theory, especially, in the theory of Fredholm operators and generalized inverses. 
\begin{theorem} \cite{Mohammad}\label{th1.1}
Suppose that $ T, G\in \mathcal{CR}\left( \mathcal{H}\right)$ such that $TG=GT$. Then 
$R\left( TG\right) $ is closed.	
\end{theorem}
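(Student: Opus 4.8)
The plan is to reduce the closedness of $R(TG)$ to a statement about a product of two orthogonal projections, and then to isolate the single geometric fact that the commutation relation must supply. Throughout I would use the standard fact that a closed range operator $S\in\mathcal{CR}(\mathcal{H})$ admits a Moore--Penrose inverse $S^{\dagger}\in\mathcal{L}(\mathcal{H})$, so that $SS^{\dagger}=\pi_{R(S)}$ and $S^{\dagger}S=\pi_{R(S^{*})}$ are adjointable orthogonal projections and $R(SS^{*})=R(S)$.

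First I would record the structural consequences of $TG=GT$. Since $G(R(T))=GT(\mathcal{H})=TG(\mathcal{H})\subseteq R(T)$ and, symmetrically, $T(R(G))\subseteq R(G)$, both $R(T)$ and $R(G)$ are invariant; likewise $G(N(T))\subseteq N(T)$, and taking adjoints in $TG=GT$ shows that $T^{*},G^{*}$ also commute, whence $G^{*}(R(T^{*}))\subseteq R(T^{*})$. In particular $R(TG)=T(R(G))$, so the whole problem is whether $T$ restricted to the invariant submodule $R(G)$ has closed range.

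Next I would peel off the two operators by two-sided operator estimates, which is where Lemma~\ref{lem1.1} and the closed range hypothesis enter, in the spirit of the inequality characterization of Lemma~\ref{lem1.2}. Because $GG^{*}$ is positive with closed range $R(G)$, functional calculus in the $C^{*}$-algebra $\mathcal{L}(\mathcal{H})$ yields $d^{2}\pi_{R(G)}\le GG^{*}\le\|G\|^{2}\pi_{R(G)}$ for some $d>0$; conjugating by $T$ gives $d^{2}\,T\pi_{R(G)}T^{*}\le (TG)(TG)^{*}\le\|G\|^{2}\,T\pi_{R(G)}T^{*}$. A two-sided bound between positive operators forces their ranges to coincide and to be simultaneously closed, so $R(TG)$ is closed iff $R\bigl(T\pi_{R(G)}T^{*}\bigr)=R\bigl((T\pi_{R(G)})(T\pi_{R(G)})^{*}\bigr)$ is closed, i.e.\ iff $T\pi_{R(G)}$ has closed range. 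Running the same estimate once more with $T^{*}T$ and $\pi_{R(T^{*})}$ reduces this, in turn, to the closedness of $R\bigl(\pi_{R(T^{*})}\pi_{R(G)}\bigr)$.

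Thus the theorem becomes equivalent to the assertion that the product of the orthogonal projections $P:=\pi_{R(T^{*})}$ and $Q:=\pi_{R(G)}$ has closed range, equivalently that $R(G)+N(T)$ is closed, equivalently that $R(T^{*})$ and $R(G)$ meet at a positive angle. This is the main obstacle: closedness of $R(PQ)$ fails for arbitrary closed range $T,G$ (for instance the Moore--Penrose inverses of commuting closed range operators need not commute), so it must be extracted from the commutation relation itself. Here I would exploit the identity $\pi_{R(T^{*})}G=T^{\dagger}TG=T^{\dagger}GT$ together with the invariances of the first step, namely that $N(T)$ is $G$-invariant and $R(T^{*})$ is $G^{*}$-invariant, to confine the analysis to the subspace on which $T$ is bounded below off its kernel (the reduced minimum modulus of $T$ on $R(T^{*})$ is positive) and $G$ is bounded below off its kernel. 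The crux is to rule out, by a sequential angle argument, that unit vectors of $R(G)$ orthogonal to $R(G)\cap N(T)$ are asymptotically annihilated by $T$; establishing this positive angle is exactly the hard point, and once it is secured, unwinding the chain of equivalences above shows that $R(TG)$ is closed.
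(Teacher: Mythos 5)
Your proposal is a chain of reductions, not a proof: the one step where the hypothesis $TG=GT$ must actually do the work is precisely the step you leave open. The reductions themselves (that $R(TG)$ is closed iff $T\pi_{R(G)}$ has closed range, iff $R\bigl(\pi_{R(T^{*})}\pi_{R(G)}\bigr)$ is closed) are plausible, but they are valid for \emph{arbitrary} closed-range $T,G$; commutativity enters only through the invariance relations of your first step, which you never use again. The crux --- that commutativity forces the positive angle, i.e.\ closedness of $R\bigl(\pi_{R(T^{*})}\pi_{R(G)}\bigr)$ --- is explicitly deferred (``establishing this positive angle is exactly the hard point''), so the theorem is not proved. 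Moreover, the technique you propose for the missing step, a ``sequential angle argument'', is a Hilbert-space method that does not survive in Hilbert $C^{*}$-modules: bounded sequences need not have weakly convergent subsequences, closed submodules such as $R(G)\cap N(T)$ need not be orthogonally complemented, and the norm distance to a kernel does not control the $\mathcal{A}$-valued inner product, so there is no angle functional with which to run such an argument. (A smaller issue: your two-sided-bound steps use Douglas-type factorization in both directions, whereas Lemma \ref{lem1.2} gives the inequality-to-inclusion direction only when the dominating operator already has closed range; this is repairable by a restriction-to-the-range argument, but as written it is circular.)

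For comparison: the paper itself gives no proof of Theorem \ref{th1.1} --- it is imported from \cite{Mohammad} --- and the proof there is purely algebraic, using exactly the tool your parenthetical brushes aside. By Theorem \ref{th1.3} (also from \cite{Mohammad}), an operator commuting with a closed-range operator commutes with its Moore--Penrose inverse; thus $TG^{\dagger}=G^{\dagger}T$, hence $T(GG^{\dagger})=(GG^{\dagger})T$, and
\[
TG\left(G^{\dagger}T^{\dagger}\right)TG
= T\left(GG^{\dagger}\right)\left(T^{\dagger}T\right)G
= \left(GG^{\dagger}\right)\left(TT^{\dagger}T\right)G
= GG^{\dagger}TG = GG^{\dagger}GT = GT = TG.
\]
So $E:=TG\,G^{\dagger}T^{\dagger}$ is a bounded idempotent with $R(E)=R(TG)$ (one inclusion because $E=(TG)G^{\dagger}T^{\dagger}$, the other because $TG=E\,TG$), and $R(E)=N(I-E)$ is closed. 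This short computation is where $TG=GT$ genuinely enters --- via commutation with Moore--Penrose inverses --- and it bypasses angles entirely. Your remark that Moore--Penrose inverses of commuting closed-range operators ``need not commute'' is in direct tension with Theorem \ref{th1.3} as the paper states it; your skepticism is not baseless (the classical form of that commutation result assumes $G$ commutes with both $T$ and $T^{*}$), but then the honest conclusion is that the hypothesis of Theorem \ref{th1.3} is what must be scrutinized. A proof of Theorem \ref{th1.1} that never invokes anything of this kind cannot close, and yours does not.
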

\begin{definition} \cite{MbeO}
The Reduced minimum modulus of $T$  is defined by	
$$\gamma\left( T\right):=inf\{ \mid \mid Tx\mid \mid,\ x\in \mathcal{H},\  dist\left( x, N\left( T\right)\right) =1                     \}.$$
Formally, we set $ \gamma\left( 0\right) := \infty.$
Clearly $\gamma\left( T\right)> 0$ if and only if $R\left( T\right)$ is closed.
			
\end{definition}
		
\begin{example}
Let $T \in \mathcal{L}\left( \mathbb{C}^{3}\right)$ be defined as follows
$$\begin{array}{ccccc}
T &: & \mathbb{C}^{3} & \longrightarrow & \mathbb{C}^{3} \\
& & \left( x_{1}, x_{2},x_{3}\right)   & \longmapsto & \left( x_{1}, x_{1}, x_{1}\right). \\
\end{array}$$
For $x=\left( x_{1}, x_{2},x_{3}\right) \in \mathbb{C}^{3}$, we have, 
$$\mid \mid Tx \mid \mid=\sqrt{3}\mid x_{1}\mid,$$ and 
$$dist\left( x, N\left( T\right)\right)=\mid x_{1}\mid. $$
Then $$ \gamma\left( T\right)=  \sqrt{3}.$$ 
\end{example}
In addition, the concept of semi-regularity  has 
benefited from the work of many authors, in particular from the work of Mbekhta \cite{MbeO} and Rakocevi$\grave{c}$ \cite{Ra}. 
\begin{definition} \cite{MbeO}
An operator $T \in \mathcal{L}\left( \mathcal{H}\right)$ is said to be semi-regular if $R(T)$ is closed and $N\left( T\right) \subset R\left( T^{n}\right),$ for every $n\geq 1 $.
		
\end{definition}
\begin{example}
All surjective and all injective operators with closed range are semi-regular. Some examples of semi-regular operators may be found in \cite{Lab}.
\end{example}
Next, we collect below some  useful properties related to  semi-regular operators
\begin{proposition} \cite{AF} \label{prop1.2}
Let $T$ be semi-regular. Then
\begin{itemize}
\item [1]. $R^{\infty}\left(T\right)$ is closed;
\item [2]. $R^{\infty}\left(T\right)=T\left( R^{\infty}\left(T\right)\right); $
\item [3]. $T-\lambda I$ is semi-regular, for all $ \mid\lambda \mid < \gamma\left( T\right).$
\end{itemize}
\end{proposition}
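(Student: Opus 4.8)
The plan is to verify the three assertions in turn, using Theorem \ref{th1.1} for the first and the defining properties of semi-regularity together with the generalized range for the remaining two. Throughout I write $M:=R^{\infty}(T)=\bigcap_{n\geq 0}R(T^{n})$. For assertion (1), I would first show by induction that $R(T^{n})$ is closed for every $n\geq 1$. The base case $n=1$ is the semi-regularity of $T$. Assuming $R(T^{n})$ is closed, so that $T^{n}\in\mathcal{CR}(\mathcal{H})$, I observe that $T^{n}$ and $T$ commute and both have closed range, so Theorem \ref{th1.1} gives that $R(T^{n+1})=R(T^{n}T)$ is closed. Since an arbitrary intersection of closed submodules is closed, $M=\bigcap_{n\geq 0}R(T^{n})$ is closed, which is exactly (1).

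For (2), the inclusion $T(M)\subseteq M$ is routine: if $z\in M$ then for each $n\geq 1$ we have $z\in R(T^{n-1})$, say $z=T^{n-1}w$, whence $Tz=T^{n}w\in R(T^{n})$; as this holds for all $n$, $Tz\in M$. For the reverse inclusion take $y\in M$. For every $n$ we have $y\in R(T^{n+1})=T\bigl(R(T^{n})\bigr)$, so the affine solution set $\{z:Tz=y\}=z_{0}+N(T)$ meets $R(T^{n})$; writing such a solution as $z_{0}+k_{n}$ with $k_{n}\in N(T)$ and invoking semi-regularity in the form $N(T)\subseteq R(T^{n})$, I get $z_{0}=(z_{0}+k_{n})-k_{n}\in R(T^{n})$. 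Since $n$ is arbitrary, $z_{0}\in M$ and $y=Tz_{0}\in T(M)$. The only delicate point here is the use of $N(T)\subseteq M$, which is precisely the part of semi-regularity beyond closedness of the range.

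For (3), fix $\lambda$ with $|\lambda|<\gamma(T)$ and set $S:=T-\lambda I$; I must check that $R(S)$ is closed and that $N(S)\subseteq R(S^{n})$ for all $n\geq 1$. I would first record two consequences of semi-regularity and of parts (1)--(2): the restriction $T|_{M}$ maps the closed submodule $M$ onto itself, and $N(S)\subseteq M$ (if $Tx=\lambda x$ with $\lambda\neq 0$ then $x=\lambda^{-1}Tx\in R(T)$, and iterating gives $x\in R(T^{n})$ for all $n$; the case $\lambda=0$ is the semi-regularity of $T$ itself). Since $T|_{M}$ is onto with $\gamma(T|_{M})\geq\gamma(T)>|\lambda|$, the perturbed restriction $S|_{M}$ is still onto $M$, so $M=S^{n}(M)\subseteq R(S^{n})$ and therefore $N(S)\subseteq M\subseteq R(S^{n})$, which gives the second semi-regularity condition. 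For closedness of $R(S)$ I would pass to the operator $\widetilde{T}$ induced by $T$ on the quotient $\mathcal{H}/M$: using $N(T^{n})\subseteq M$ (which follows from (2) by induction on $n$, since $Tx\in N(T^{n})\subseteq M=T(M)$ forces $x\in M+N(T)\subseteq M$) one shows $\widetilde{T}$ is bounded below with $\gamma(\widetilde{T})\geq\gamma(T)$, so for $|\lambda|<\gamma(T)$ the induced $\widetilde{S}=\widetilde{T}-\lambda\widetilde{I}$ remains bounded below; combining the surjectivity of $S$ on $M$ with the bounded-belowness of $\widetilde{S}$ on $\mathcal{H}/M$ yields that $R(S)$ is closed, and by the reduced minimum modulus criterion $\gamma(S)>0$.

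I expect the main obstacle to be this closedness step in (3): establishing the reduced-minimum-modulus bounds $\gamma(T|_{M})\geq\gamma(T)$ and $\gamma(\widetilde{T})\geq\gamma(T)$, the stability of surjectivity and of bounded-belowness under a scalar perturbation of size $<\gamma(T)$, and the patching of the behaviour on $M$ and on $\mathcal{H}/M$ back into a statement about $R(S)$. These steps are classical in the Hilbert space setting, but in a Hilbert $\mathcal{A}$-module they require extra care, since a closed submodule such as $M$ need not be orthogonally complemented and the quotient $\mathcal{H}/M$ must be handled at the level of the module structure rather than through an orthogonal decomposition.
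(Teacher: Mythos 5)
The paper never proves this proposition: it is imported wholesale from Aiena's book \cite{AF}, so your attempt can only be measured against the classical argument there, and against correctness. Measured that way, part (1) of your proposal has a genuine gap. Your induction rests on Theorem \ref{th1.1}, and that theorem, as reproduced in this paper, is false. Counterexample: on $H\oplus H$, with $H=\ell^{2}$ and $A$ the injective operator $Ae_{n}=n^{-1}e_{n}$ (dense, non-closed range), set $T(x,y)=(Ax,x)$. Then $R(T)=\{(Ax,x):x\in H\}$ is closed and $T$ commutes with itself, yet $R(T^{2})=\{(Aw,w):w\in R(A)\}$ is not closed; in fact $R^{\infty}(T)=\{(Aw,w):w\in R^{\infty}(A)\}$ is not closed either. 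This also exposes the structural defect of your argument: it uses only closedness of $R(T)$ and commutativity and never invokes the kernel condition $N(T)\subseteq R(T^{n})$, but the example shows the conclusion genuinely fails for closed-range operators that are not semi-regular, so no argument of that shape can succeed. The correct inductive step is the standard lemma: if $R(T)$ is closed and $M$ is a closed submodule with $N(T)\subseteq M$, then $T(M)$ is closed (the image $q(M)$ under the quotient map $q:\mathcal{H}\rightarrow\mathcal{H}/N(T)$ is closed because $q^{-1}(q(M))=M+N(T)=M$, and the operator induced by $T$ on $\mathcal{H}/N(T)$ is bounded below since $R(T)$ is closed). Applying this with $M=R(T^{n})$, which contains $N(T)$ precisely by semi-regularity, gives that $R(T^{n+1})=T(R(T^{n}))$ is closed, and then the intersection is closed. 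That is exactly where semi-regularity must enter, and it is absent from your part (1). Incidentally, the same counterexample means the paper's own later appeals to Theorem \ref{th1.1} deserve scrutiny; the cited source proves a statement with hypotheses beyond commutativity and closed ranges.

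Parts (2) and (3) are in much better shape. Your proof of (2) is complete, correct, and is the classical one. For (3) you have the right strategy (the Mbekhta--Aiena restriction/quotient argument), and the estimates you flag as obstacles are all within reach: $\gamma(T|_{M})\geq\gamma(T)$ because $N(T|_{M})=N(T)\subseteq M$, so distances to the kernel are computed the same way inside $M$; $\gamma(\widetilde{T})\geq\gamma(T)$ follows from $M=T(M)$ and $N(T)\subseteq M$ via $\operatorname{dist}(Tx,M)=\operatorname{dist}\bigl(Tx,T(M)\bigr)\geq\gamma(T)\operatorname{dist}\bigl(x,M+N(T)\bigr)=\gamma(T)\operatorname{dist}(x,M)$; surjectivity (resp.\ bounded-belowness) survives scalar perturbations of modulus $<\gamma$ by the usual iterative/Neumann-series argument (resp.\ the triangle inequality); and closedness of $R(S)$ follows because $M=S(M)\subseteq R(S)$ gives $R(S)=p^{-1}\bigl(R(\widetilde{S})\bigr)$ for the quotient map $p:\mathcal{H}\rightarrow\mathcal{H}/M$. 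Your closing worry about Hilbert-module pathologies is unnecessary: every notion in the statement (range, kernel, closedness, $\gamma$) concerns only the underlying Banach space of $\mathcal{H}$, so the Banach-space proof applies verbatim and no orthogonal complementation of $M$ is ever needed. Still, as written, (3) is a plan rather than a proof, since these key inequalities are asserted rather than established; once they are written out as above, and once (1) is repaired by the lemma indicated, the whole proposal becomes a correct proof along the classical lines of \cite{AF}.
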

Recall that the semi-regular resolvent of a bounded operator $T$ is defined by
	$$ reg\left( T\right) =\{    \lambda \in\mathbb{C}: T-\lambda I \,\, is \,\, semi\,-\,regular \}.$$
	
\begin{theorem}\cite{AF} \label{th1.2}
Let $T$ be semi-regular and $\Omega $ be a connected component of $reg\left( T\right) $ and $\lambda_{0} \in \Omega$, then $  R^{\infty}\left(T-\lambda I\right)=  R^{\infty}\left(T-\lambda_{0} I\right)$, for every $ \lambda \in \Omega.$
\end{theorem}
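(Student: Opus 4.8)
The plan is to prove the statement in two stages: a local statement, asserting that $\lambda\mapsto R^{\infty}(T-\lambda I)$ is constant on a small disc about any point of $reg(T)$, followed by a connectedness argument that propagates this constancy over the whole component $\Omega$. Throughout I abbreviate $M_{\lambda}:=R^{\infty}(T-\lambda I)$ for $\lambda\in\Omega$. Since every $T-\lambda I$ with $\lambda\in\Omega$ is semi-regular, Proposition \ref{prop1.2} tells me that $M_{\lambda}$ is closed and that $(T-\lambda I)(M_{\lambda})=M_{\lambda}$; consequently the restriction $S_{\lambda}:=(T-\lambda I)|_{M_{\lambda}}$ is a bounded surjection of the closed submodule $M_{\lambda}$ onto itself.

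Fix $\lambda_{0}\in\Omega$ and set $\mu:=\lambda-\lambda_{0}$. For the first inclusion I exploit the stability of surjectivity under small perturbations. As $S_{\lambda_{0}}$ is a bounded surjection of the Banach space $M_{\lambda_{0}}$ onto itself, the open mapping theorem yields a surjectivity modulus $q(S_{\lambda_{0}})>0$, and since $(T-\lambda I)|_{M_{\lambda_{0}}}=S_{\lambda_{0}}-\mu I$, this restriction still maps $M_{\lambda_{0}}$ onto $M_{\lambda_{0}}$ whenever $|\mu|<q(S_{\lambda_{0}})$. Iterating, $M_{\lambda_{0}}=(T-\lambda I)^{n}(M_{\lambda_{0}})\subseteq R((T-\lambda I)^{n})$ for every $n\geq 0$, and intersecting over $n$ gives $M_{\lambda_{0}}\subseteq R^{\infty}(T-\lambda I)=M_{\lambda}$. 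Running the identical computation with the two points interchanged, namely writing $(T-\lambda_{0}I)|_{M_{\lambda}}=S_{\lambda}+\mu I$ and invoking the surjectivity of $S_{\lambda}$ (valid once $|\mu|<q(S_{\lambda})$), produces the reverse inclusion $M_{\lambda}\subseteq M_{\lambda_{0}}$, hence equality.

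The step I expect to be the main obstacle is reconciling the radii in these two inclusions. The first holds on the fixed disc $|\mu|<q(S_{\lambda_{0}})$, but the second only for $|\mu|<q(S_{\lambda})$, and a priori $q(S_{\lambda})$ varies with $\lambda$ because the underlying submodule $M_{\lambda}$ itself moves; there is no fixed Banach space on which $S_{\lambda}$ is a norm-continuous family. To obtain a genuinely uniform disc I would prove that $q(S_{\lambda})$ is bounded below as $\lambda\to\lambda_{0}$. The natural instrument is Proposition \ref{prop1.2}(3), which already furnishes the fixed radius $\gamma(T-\lambda_{0}I)$ on which semi-regularity persists, combined with the lower semicontinuity of the reduced minimum modulus along the semi-regular resolvent; concretely I would estimate $q(S_{\lambda})$ from below by a quantity of the form $\gamma(T-\lambda_{0}I)-|\mu|$, so that both inclusions survive on a common disc $D(\lambda_{0},\varepsilon)$ with $\varepsilon>0$ independent of $\lambda$. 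This uniformity is precisely the quantitative heart of the argument.

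Granting the local constancy, the global conclusion is a clopen argument. I set $A:=\{\lambda\in\Omega: M_{\lambda}=M_{\lambda_{0}}\}$, which is nonempty as it contains $\lambda_{0}$. Applying the local statement at each point of $A$ shows $A$ is open in $\Omega$, and applying it at each point of $\Omega\setminus A$ shows the complement is open too, so $A$ is also closed in $\Omega$. Because $\Omega$ is connected, $A=\Omega$, which is exactly the assertion that $R^{\infty}(T-\lambda I)=R^{\infty}(T-\lambda_{0}I)$ for every $\lambda\in\Omega$.
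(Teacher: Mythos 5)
The paper contains no proof of this statement to compare against: Theorem \ref{th1.2} is imported verbatim from \cite{AF}, so your attempt can only be measured against the standard argument in that reference, whose architecture you have in fact reproduced. The parts you actually prove are correct. Writing $M_{\lambda}=R^{\infty}(T-\lambda I)$, the facts that $M_{\lambda_{0}}$ is closed, that $(T-\lambda_{0}I)(M_{\lambda_{0}})=M_{\lambda_{0}}$, that $M_{\lambda_{0}}$ is invariant under $T-\lambda I$, and that a bounded surjection of a Banach space onto itself remains surjective under a perturbation of norm smaller than its surjectivity modulus together yield $M_{\lambda_{0}}\subseteq M_{\lambda}$ for $|\lambda-\lambda_{0}|<q(S_{\lambda_{0}})$; and the clopen argument correctly converts \emph{two-sided} local constancy into constancy on $\Omega$.

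The genuine gap is exactly the step you defer with ``I would prove'': the uniform lower bound on $q(S_{\lambda})$ for $\lambda$ near $\lambda_{0}$. This is not a technical loose end but the entire analytic content of the theorem, and nothing you cite supplies it. Item (3) of the paper's proposition on semi-regular operators asserts only that $T-\lambda I$ stays semi-regular for $|\lambda|<\gamma(T)$; it carries no quantitative link between $\gamma(T-\lambda I)$ and $\gamma(T)$. Since the reduced minimum modulus is in general badly discontinuous, the ``lower semicontinuity along the semi-regular resolvent'' you invoke is itself a nontrivial theorem (essentially the estimate $\gamma(T-\lambda I)\geq \gamma(T)-|\lambda|$ for semi-regular $T$, proved in \cite{AF} by distance estimates playing $N(T-\lambda I)$ against $R^{\infty}(T)$), and even granting it you would still need the transfer $q(S_{\lambda})=\gamma(S_{\lambda})\geq\gamma(T-\lambda I)$, which uses $N(T-\lambda I)\subseteq M_{\lambda}$. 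Nor can the gap be closed by topological cleverness alone: the one-sided statement you did prove --- every $\mu\in\Omega$ admits a disc of \emph{some} positive radius on which $M_{\mu}\subseteq M_{\lambda}$ --- is formally insufficient to force constancy. Indeed, take a nonempty proper closed subset $C\subset\mathbb{C}$ and set $M_{\mu}=\{0\}$ for $\mu\in C$ and $M_{\mu}=\mathcal{H}$ otherwise: every point has such a disc (points of $C$ trivially, points off $C$ because $\mathbb{C}\setminus C$ is open), yet the assignment is not constant. So any correct proof must inject the quantitative lemma, and as written your proposal establishes only the easy half.
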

	
In \cite{XQS}, Recall that the Moore-Penrose inverse of an operator $T \in\mathcal{L}\left( \mathcal{H}\right)$ with closed range is defined as the unique operator $T^{\dagger} \in \mathcal{L}\left( \mathcal{H}\right) $ such that:
$$  TT^{\dagger}x=x, \,\, for \,\, every\,\, x \in R\left( T\right).$$
			
\begin{example} \cite{salah}
Let $T \in \mathcal{CR}\left( \mathcal{H}\right) $  such that $T^{2}=T$. We have
$$T^{\dagger}=T^{\dagger}TT^{\dagger}= T^{\dagger}TTT^{\dagger}=P_{R\left( T^{\ast}\right) }P_{R\left( T\right) }. $$	
\end{example}	
Now, we list below some useful properties related to Moore-Penrose inverses. 
\begin{proposition} \cite{SNG} \label{prop1.2}
Let $T \in\mathcal{L}\left( \mathcal{H}\right) $ be a closed range. Then 
\begin{itemize}
\item [1.] $R\left( T^{\dagger}\right) = R\left( T^{\ast}\right)=N\left( T\right) ^{\perp};$ 
\item [2.] $N\left(  T^{\dagger}\right) = N\left(  T^{\ast}\right)=R\left( T\right) ^{\perp};$
\item [3.]  $T^{\dagger \ast}=T^{\ast \dagger}.$	
\end{itemize}
\end{proposition}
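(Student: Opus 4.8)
The plan is to work from the four Penrose identities that characterize the Moore--Penrose inverse, namely $TT^{\dagger}T=T$, $T^{\dagger}TT^{\dagger}=T^{\dagger}$, $(TT^{\dagger})^{\ast}=TT^{\dagger}$ and $(T^{\dagger}T)^{\ast}=T^{\dagger}T$; these are the operational content of the definition recalled above and single out $T^{\dagger}$ uniquely. The first observation is that $P:=TT^{\dagger}$ and $Q:=T^{\dagger}T$ are \emph{orthogonal projections}: each is idempotent (e.g.\ $Q^{2}=T^{\dagger}(TT^{\dagger}T)=T^{\dagger}T=Q$ by the first identity) and self-adjoint (third and fourth identities). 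Two elementary facts about adjointable maps are used throughout: $N(T)=R(T^{\ast})^{\perp}$ and $N(T^{\ast})=R(T)^{\perp}$, which follow at once from $\langle Tx,y\rangle=\langle x,T^{\ast}y\rangle$, together with the fact that a closed-range adjointable operator has orthogonally complemented ranges, so that $R(T^{\ast})^{\perp\perp}=R(T^{\ast})$ and $R(T)^{\perp\perp}=R(T)$.

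For item~1 I would first show $R(T^{\dagger})=R(Q)$. The inclusion $R(Q)\subseteq R(T^{\dagger})$ is clear, and $R(T^{\dagger})\subseteq R(Q)$ follows from $T^{\dagger}=T^{\dagger}TT^{\dagger}=QT^{\dagger}$. Next, $N(Q)=N(T)$: if $Qx=0$ then $TQx=Tx=0$ by the first identity, while $Tx=0$ trivially gives $Qx=0$. Since $Q$ is an orthogonal projection, $R(Q)=N(Q)^{\perp}$, whence $R(T^{\dagger})=N(T)^{\perp}$; and $N(T)^{\perp}=R(T^{\ast})^{\perp\perp}=R(T^{\ast})$ by the complementation remark. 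Item~2 is handled symmetrically through $P=TT^{\dagger}$: one checks $R(P)=R(T)$ (from $TT^{\dagger}T=T$) and $N(T^{\dagger})=N(P)$, the inclusion $N(P)\subseteq N(T^{\dagger})$ coming from $T^{\dagger}=T^{\dagger}TT^{\dagger}=T^{\dagger}P$ and the reverse from $T^{\dagger}x=0\Rightarrow Px=0$, so that $N(T^{\dagger})=N(P)=R(P)^{\perp}=R(T)^{\perp}=N(T^{\ast})$.

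For item~3 the strategy is to verify that $T^{\dagger \ast}$ satisfies the four Penrose identities \emph{for} $T^{\ast}$ and then invoke the uniqueness of the Moore--Penrose inverse to conclude $T^{\dagger \ast}=(T^{\ast})^{\dagger}=T^{\ast \dagger}$. Concretely, taking adjoints of the four identities for $T^{\dagger}$ turns $TT^{\dagger}T=T$ into $T^{\ast}T^{\dagger \ast}T^{\ast}=T^{\ast}$ and $T^{\dagger}TT^{\dagger}=T^{\dagger}$ into $T^{\dagger \ast}T^{\ast}T^{\dagger \ast}=T^{\dagger \ast}$, while the self-adjointness of $TT^{\dagger}$ and of $T^{\dagger}T$ yields, respectively, that $T^{\dagger \ast}T^{\ast}$ and $T^{\ast}T^{\dagger \ast}$ are self-adjoint; these are exactly the four defining relations for $(T^{\ast})^{\dagger}$.

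The only genuinely $C^{\ast}$-algebraic point, and the step I expect to need the most care, is the passage $N(T)^{\perp}=R(T^{\ast})$ and its analogue for $R(T)$: unlike in a Hilbert space, a closed submodule of a Hilbert $C^{\ast}$-module need not be orthogonally complemented, so $M^{\perp\perp}=M$ is not automatic. What rescues the argument is precisely the standing hypothesis $T\in\mathcal{CR}(\mathcal{H})$: for an adjointable operator with closed range both $R(T)$ and $R(T^{\ast})$ are orthogonally complemented (equivalently, $T^{\dagger}$ exists and $P,Q$ above are genuine orthogonal projections), and this is what legitimises every perp-of-kernel identity used above. Everything else is purely algebraic manipulation of the Penrose relations and is independent of the scalar- versus operator-valued nature of the inner product.
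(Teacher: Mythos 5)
Your proof is correct, but there is nothing in the paper to compare it against: the paper does not prove this proposition, it simply quotes it as a preliminary from the reference \cite{SNG}, so your argument supplies a proof where the paper gives none. On its own merits your write-up is sound and is the standard route: showing $P=TT^{\dagger}$ and $Q=T^{\dagger}T$ are orthogonal projections, the range/kernel bookkeeping ($R(T^{\dagger})=R(Q)$, $N(Q)=N(T)$, $R(P)=R(T)$, $N(T^{\dagger})=N(P)$) via the Penrose identities, and the uniqueness argument for item 3 obtained by taking adjoints of the four relations. You also correctly isolated the only genuinely module-theoretic point, namely that $M^{\perp\perp}=M$ is not automatic for a closed submodule of a Hilbert $C^{\ast}$-module, and that the closed-range hypothesis is exactly what guarantees $\mathcal{H}=N(T)\oplus R(T^{\ast})=R(T)\oplus N(T^{\ast})$ (Lance's theorem on adjointable operators with closed range), which legitimises $N(T)^{\perp}=R(T^{\ast})$ and $R(T)^{\perp}=N(T^{\ast})$. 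Two small remarks. First, the ``definition'' of $T^{\dagger}$ recalled in the paper (only $TT^{\dagger}x=x$ for $x\in R(T)$) does not single out a unique operator, so your decision to work from the four Penrose relations, which is the actual characterization in the cited literature, is the right repair rather than a liberty. Second, for item 3 you should note explicitly that the existence of $(T^{\ast})^{\dagger}$ is not presupposed: your verification that $T^{\dagger\ast}$ satisfies the four relations for $T^{\ast}$ simultaneously proves existence and, combined with the purely algebraic uniqueness of any solution of the four equations, yields $T^{\dagger\ast}=T^{\ast\dagger}$; as written, the phrase ``invoke the uniqueness'' slightly obscures that the same computation is doing both jobs.
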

The reader is referred for instance to \cite{israel, HARTE} for more information.

\begin{theorem} \cite{Mohammad} \label{th1.3}	Let $ T \in  \mathcal{CR}\left( \mathcal{H}\right) $ be closed range and  $ G \in  \mathcal{L}\left( \mathcal{H}\right)  $ be an arbitrary operator which commutes with $ T.$ Then $G$ commutes with $T^{\dagger}$.
\end{theorem}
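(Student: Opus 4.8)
\noindent\emph{Proof strategy.} The plan is to attach to $T$ its two canonical projections and reduce the whole statement to the commutation of $G$ with them. Since $T\in\mathcal{CR}(\mathcal{H})$, the Moore--Penrose inverse $T^{\dagger}$ exists, and the idempotents $P:=TT^{\dagger}$ and $Q:=T^{\dagger}T$ are self--adjoint with $R(P)=R(T)$ and $R(Q)=R(T^{\dagger})=R(T^{\ast})=N(T)^{\perp}$; thus $P=\pi_{R(T)}$ and $Q=\pi_{N(T)^{\perp}}$. From $GT=TG$ one reads off at once that $G$ leaves $R(T)$ and $N(T)$ invariant, while taking adjoints gives $T^{\ast}G^{\ast}=G^{\ast}T^{\ast}$, so $G^{\ast}$ leaves $R(T^{\ast})$ and $N(T^{\ast})$ invariant. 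The target $GT^{\dagger}=T^{\dagger}G$ will be obtained by showing that $G$ commutes with both $P$ and $Q$.

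Granting $GP=PG$ and $GQ=QG$, the conclusion follows cleanly, so I would record this reduction first. Put $D:=GT^{\dagger}-T^{\dagger}G$. Using $TT^{\dagger}=P$ and $GT=TG$ one computes $TD=TGT^{\dagger}-TT^{\dagger}G=GTT^{\dagger}-PG=GP-PG=0$, so that $R(D)\subseteq N(T)$. On the other hand $R(T^{\dagger}G)\subseteq R(T^{\dagger})=N(T)^{\perp}$, and since $GQ=QG$ forces $G\,R(T^{\ast})\subseteq R(T^{\ast})$ we also get $R(GT^{\dagger})=G\,R(T^{\ast})\subseteq N(T)^{\perp}$; hence $R(D)\subseteq N(T)^{\perp}$. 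Consequently $R(D)\subseteq N(T)\cap N(T)^{\perp}=\{0\}$, i.e.\ $D=0$, which is precisely $GT^{\dagger}=T^{\dagger}G$.

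The whole difficulty is therefore concentrated in the projection step, and this is where I expect the main obstacle. Commuting with $P=\pi_{R(T)}$ requires $G$ to preserve not only $R(T)$ (already granted) but also its complement $R(T)^{\perp}=N(T^{\ast})$; commuting with $Q=\pi_{N(T)^{\perp}}$ requires $G$ to preserve $N(T)$ (granted) together with $N(T)^{\perp}=R(T^{\ast})$. Equivalently, the crux is to show $G\,R(T^{\ast})\subseteq R(T^{\ast})$ and $G\,N(T^{\ast})\subseteq N(T^{\ast})$, i.e.\ to propagate the intertwining relation from $T$ onto the subspaces governed by $T^{\ast}$. Because $R(T)$ is closed, the complemented splittings $\mathcal{H}=R(T)\oplus N(T^{\ast})=N(T)\oplus R(T^{\ast})$ are available, so $P,Q$ are genuine adjointable projections and $R(T^{\ast})$ is closed. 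My plan for the inclusion $G\,R(T^{\ast})\subseteq R(T^{\ast})$ is to rewrite it as $R(GT^{\ast})\subseteq R(T^{\ast})$ and to apply Lemma \ref{lem1.2} with $GT^{\ast}$ in the role of $T$ and $T^{\ast}$ in the role of $G$: since $R(T^{\ast})$ is closed, the inclusion reduces to a modular estimate of the form $\alpha\langle TG^{\ast}x,TG^{\ast}x\rangle\leq\langle Tx,Tx\rangle$, which I would extract from $T^{\ast}G^{\ast}=G^{\ast}T^{\ast}$ together with the Paschke inequality of Lemma \ref{lem1.1}. Making this estimate close up — that is, showing that the commutation with $T$ indeed forces $G$ to respect the orthogonal complements $N(T^{\ast})$ and $R(T^{\ast})$ — is the delicate point on which the entire argument rests, and it is the step I would devote the most effort to verifying.
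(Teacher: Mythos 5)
The first thing to note is that this paper contains no proof of Theorem \ref{th1.3}: it is quoted from \cite{Mohammad}, so there is no internal argument to compare yours against, and your attempt must be judged on its own. Your reduction step is correct as far as it goes: since $R(T)$ is closed, the submodules $R(T)$, $N(T)$, $R(T^{\ast})$, $N(T^{\ast})$ are orthogonally complemented, $P=TT^{\dagger}=\pi_{R(T)}$ and $Q=T^{\dagger}T=\pi_{R(T^{\ast})}$ are genuine projections, and your computation $TD=GP-PG$ for $D=GT^{\dagger}-T^{\dagger}G$, combined with $R(D)\subseteq N(T)^{\perp}$, does force $D=0$ once $G$ commutes with $P$ and $Q$. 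You also located the entire difficulty in exactly the right place.

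But the step you left open is not merely delicate --- it is unprovable, because the theorem as quoted here is false. Take $G=T$ for any $T\in\mathcal{CR}(\mathcal{H})$ that is not EP: then $GT=TG$ holds trivially, while the conclusion $GT^{\dagger}=T^{\dagger}G$ reads $TT^{\dagger}=T^{\dagger}T$, i.e.\ $\pi_{R(T)}=\pi_{R(T^{\ast})}$, i.e.\ $R(T)=R(T^{\ast})$, which is precisely the EP property. The paper itself supplies such an operator: the shift $K(x_{1},x_{2},\dots)=(0,x_{1},x_{2},\dots)$ on $l^{2}(\mathbb{C})$ has closed range, $K^{\dagger}=K^{\ast}$, and $KK^{\ast}\neq K^{\ast}K$. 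In this example $G=K$ fails to leave $N(K^{\ast})=\mathbb{C}e_{1}$ invariant (indeed $Ke_{1}=e_{2}$), which is exactly where your plan collapses: $GT=TG$ gives invariance of $R(T)$ and $N(T)$ under $G$, and invariance of $R(T^{\ast})$, $N(T^{\ast})$ under $G^{\ast}$, but nothing transfers the latter to $G$ itself, so no estimate extracted from Lemma \ref{lem1.1} and Lemma \ref{lem1.2} can produce the inclusion $G\,N(T^{\ast})\subseteq N(T^{\ast})$ you need. The true version of this result (presumably the one intended in \cite{Mohammad}) assumes that $G$ commutes with \emph{both} $T$ and $T^{\ast}$; under that hypothesis $G$ preserves all four canonical subspaces, hence commutes with $P$ and $Q$, and your $D=0$ argument then completes the proof verbatim. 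You should also be aware that the paper's only application of Theorem \ref{th1.3}, inside the proof of Theorem \ref{theo2.4}, assumes only $T^{\ast}K=KT^{\ast}$ and therefore inherits the same gap.
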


EP matrix, as an extension of normal matrix, has been extended by Campbell and Meyer \cite{ep} to operators with closed range on a Hilbert space.
	
\begin{definition} \cite{Sharifi}
An operator $ T \in \mathcal{L}\left( \mathcal{H}\right) $ is called an EP operator if $ R\left( T\right) $ is closed and $R\left( T\right) =R\left( T^{\ast}\right) .$  
\end{definition}
\begin{example}
Let $T\in \mathcal{L}\left( l^{2}\left( \mathbb{C}\right)\right)  $ be defined as follows:
$$ T\left( \left( x_{j}\right) _{j \geq 1}\right) = \left(\left( y_{j}\right) _{j \geq 1} \right),$$ where
		$$
		y_{j}= \left\{
		\begin{array}{ll}
			x_{1}-x_{3} & \mbox{if } j=1 \\
			0 & \mbox{if } j=2 \\
			x_{j}  &\mbox{if $j\geq 3$}\\
		\end{array}
		\right.
		$$
		By some straightforward computations, we obtain that $T$ is an EP-operator.
\end{example}
	
\begin{proposition} \cite{MMM} \label{prop1.4}
Let $ T\in \mathcal{L}\left( \mathcal{H}\right)  $ be a normal operator with closed range. Then $T$ is an EP operator.
\end{proposition}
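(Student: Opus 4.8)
The plan is to reduce the whole statement to a single inner-product identity that normality forces between $T$ and $T^{*}$, and then to feed that identity into Lemma~\ref{lem1.2}. Since $R(T)$ is closed by hypothesis, the only thing left to establish in order to conclude that $T$ is EP is the equality $R(T)=R(T^{*})$.

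First I would exploit normality at the level of the $\mathcal{A}$-valued inner product. Using the defining relation of the adjoint together with $T^{*}T=TT^{*}$, one obtains
\begin{equation*}
\langle Tx,Tx\rangle=\langle x,T^{*}Tx\rangle=\langle x,TT^{*}x\rangle=\langle T^{*}x,T^{*}x\rangle,\qquad\text{for all }x\in\mathcal{H}.
\end{equation*}
This one identity already carries the two pieces of information I need: taking norms gives $\|Tx\|=\|T^{*}x\|$ for every $x$, and the positive-definiteness of the inner product (condition (i) of the definition of a pre-Hilbert module) gives $N(T)=N(T^{*})$.

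Next I would check that $R(T^{*})$ is closed, which must be secured before Lemma~\ref{lem1.2} can be applied in the direction involving $T^{*}$. Here the reduced minimum modulus is the natural tool: since $\|Tx\|=\|T^{*}x\|$ and $N(T)=N(T^{*})$, the infimum defining $\gamma(T^{*})$ coincides termwise with the one defining $\gamma(T)$, whence $\gamma(T^{*})=\gamma(T)$. Because $R(T)$ is closed we have $\gamma(T)>0$, so $\gamma(T^{*})>0$ and therefore $R(T^{*})$ is closed as well. I expect this to be the main obstacle: Lemma~\ref{lem1.2} is stated only for a pair whose second operator has closed range, so without first producing $\gamma(T^{*})>0$ one cannot even form the inclusion $R(T)\subseteq R(T^{*})$.

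Finally I would invoke Lemma~\ref{lem1.2} twice with $\alpha=1$. Applying it to the pair $(T,T^{*})$, now legitimate since $R(T^{*})$ is closed, the identity $\langle T^{*}x,T^{*}x\rangle=\langle Tx,Tx\rangle$ yields $R(T)\subseteq R(T^{*})$; applying it to the pair $(T^{*},T)$, using that $R(T)$ is closed, the same identity read in reverse yields $R(T^{*})\subseteq R(T)$. Combining the two inclusions gives $R(T)=R(T^{*})$, and together with the closedness of $R(T)$ this is precisely the definition of an EP operator. The remaining work is just the clean algebraic identity above and two bookkeeping applications of the lemma.
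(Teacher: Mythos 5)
Your proof is correct, but note that the paper itself offers no argument to compare against: Proposition~\ref{prop1.4} is imported by citation from \cite{MMM} and never proved in the text, so your write-up is genuinely new content relative to the paper. Each step checks out in the Hilbert $C^{\ast}$-module setting: the identity $\langle Tx,Tx\rangle=\langle x,T^{\ast}Tx\rangle=\langle x,TT^{\ast}x\rangle=\langle T^{\ast}x,T^{\ast}x\rangle$ is legitimate for adjointable operators, and you correctly isolated the one real obstacle, namely that Lemma~\ref{lem1.2} cannot be invoked for the pair $(T,T^{\ast})$ until $R(T^{\ast})$ is known to be closed; your reduced-minimum-modulus argument ($\|Tx\|=\|T^{\ast}x\|$ and $N(T)=N(T^{\ast})$ force $\gamma(T^{\ast})=\gamma(T)>0$) settles this using only the characterization stated in the paper's own definition. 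The two applications of Lemma~\ref{lem1.2} with $\alpha=1$ then give both inclusions and hence $R(T)=R(T^{\ast})$. For comparison, the proof in the cited literature typically goes through the Moore--Penrose inverse and the orthogonal decomposition $\mathcal{H}=R(T)\oplus N(T^{\ast})$ available for closed-range adjointable operators; your route is more elementary within this paper's framework, since it uses only tools the authors have already placed on the table (Lemma~\ref{lem1.1}, Lemma~\ref{lem1.2}, and the reduced minimum modulus), at the modest cost of having to verify the closedness of $R(T^{\ast})$ by hand rather than quoting the general fact that closed range of $T$ implies closed range of $T^{\ast}$.
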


\section{Main results }
In this section, we begin with the following definition.  
\begin{definition}
 A pair $\left( \{x_{j}\}_{j\in \mathbb{J}}, \{y_{j}\}_{j\in \mathbb{J}}\right)   $ of sequences in $\mathcal{H}$ is called $K$-biframes for $ \mathcal{H}$, if there exist $\alpha,\beta >0 $ such that 
\begin{equation*}
\alpha  \langle K^{\ast}x, K^{\ast}x\rangle \leq\sum_{j\in\mathbb{J}}\langle x,x_{j}\rangle\langle y_{j},x\rangle   \leq \beta  \langle x, x\rangle,\,\,\left( \forall x \in \mathcal{H}\right).
\end{equation*}		
$\alpha$ and $\beta$  are called lower and upper $K$-biframe bounds, respectively.
\end{definition}

To throw more light on the subject and understand the use of this concept, we exhibit below some examples of the $K$-biframes.
\begin{example}
 Let $K \in \mathcal{L}\left(\mathbb{C}^{3} \right) $ be defined by
	  $$
		Ke_{j}= \left\{
		\begin{array}{ll}
			3e_{1}& \mbox{if } j=1 \\
			e_{j} & \mbox{if } j=2,3 
		\end{array}
		\right.
		$$
		where $\{e_{j}\}_{j\geq 1}$ is an orthonormal basis for $\mathbb{C}^{3}. $
		Obviously, we have
		$$
		K^{\ast}e_{j}= \left\{
		\begin{array}{ll}
			3e_{1}& \mbox{if } j=1 \\
			e_{j} & \mbox{if } j=2,3 
		\end{array}
		\right.
		$$
		For $x=\left( x_{1},x_{2},x_{3}\right) \in \mathbb{C}^{3} $, we get
		$$ \mid\mid K^{\ast}x\mid\mid^{2}=9\mid x_{1}\mid ^{2}+ \mid x_{2}\mid ^{2}+\mid x_{3}\mid ^{2}.$$
		Now, consider, for $z \in \mathbb{R}$
		$$
		\theta_{j}= \left\{
		\begin{array}{ll}
			e^{iz} e_{1} & \mbox{if } j=1 \\
			\dfrac{1}{2}e_{2} & \mbox{if } j=2 \\
			e_{3}  &\mbox{if $j=3$}\\
		\end{array}
		\right.
		and \,\,
		\psi_{j}= \left\{
		\begin{array}{ll}
			e^{iz}e_{1} & \mbox{if } j=1 \\
			e_{2} & \mbox{if } j=2\\
			\dfrac{1}{3}e_{3} & \mbox{if } j=3
		\end{array}
		\right.
		$$
		hence
		$$ \sum_{j=1}^{3} \langle x,\theta_{j}\rangle \langle\psi_{j},x\rangle  =  \mid x_{1}\mid ^{2}+ \dfrac{1}{2}\mid x_{2}\mid ^{2}+\dfrac{1}{3}\mid x_{3}\mid ^{2}.                                              $$
		Thus
		$$  \dfrac{1}{9}\mid\mid K^{\ast}x \mid\mid ^{2} \leq\sum_{j=1}^{3} \langle x,\theta_{j}\rangle \langle\psi_{j},x\rangle \leq \mid\mid x\mid \mid^{2}.                    $$
		Which implies that $\left( \{\theta_{j}\}_{1\leq j \leq3}, \{\psi_{j}\}_{1\leq j \leq3}\right) $ is a $K$-biframe for $\mathbb{C}^{3}.$
	
\end{example}
	
\begin{example}
Let  $\{e_{n} \}_{n \geq 1}$ be an orthonormal basis of $l^{2}\left(  \mathbb{C}\right)$ and $K \in \mathcal{L}\left(l^{2}\left(  \mathbb{C}\right) \right)$ be defined as follows
$$\begin{array}{ccccc}
K &: & l^{2}\left(  \mathbb{C}\right)  & \longrightarrow & l^{2}\left(  \mathbb{C}\right)  \\
& & \left( x_{1}, x_{2},...\right)   & \longmapsto & \left( 0, x_{1}, x_{2}, x_{3},...\right) . \\
\end{array}$$
Clearly, we have
$$\begin{array}{ccccc}
K^{\ast} &: & l^{2}\left(  \mathbb{C}\right)  & \longrightarrow & l^{2}\left(  \mathbb{C}\right)  \\
& & \left( x_{1}, x_{2},...\right)   & \longmapsto & \left( x_{2}, x_{3},...\right) . \\
\end{array}$$
By setting, for $a \in \mathbb{R}$
$$f_{1}=e ^{ia} e_{1} \,\,\, and \,\,\, f_{j}=e_{j}, \,\,for \,\, j \geq 2.$$
and
$$  g_{1}=\dfrac{1}{\sqrt{2}}e ^{ia} e_{1} \,\,\, and \,\,\, g_{j}=e_{j}, \,\,for \,\, j \geq 2.$$ 
For $x=\left( x_{j}\right)_{j \geq 1} \in l^{2}\left( \mathbb{C}\right) $, we obtain
$$    \sum_{j\geq 1}\langle x,f_{j}\rangle \langle g_{j}, x\rangle= \dfrac{1}{\sqrt{2}}\mid x_{1} \mid ^{2} +\sum_{j \geq 2 }\mid x_{j} \mid ^{2},                  $$
	and
$$\mid\mid K^{\ast}x \mid\mid ^{2}=\sum_{j \geq 1 }\mid x_{j} \mid ^{2}.$$
Thus
$$ \dfrac{1}{\sqrt{2}}\mid\mid K^{\ast}x \mid\mid ^{2}\leq\sum_{j\geq 1}\langle x,f_{j}\rangle \langle g_{j}, x\rangle\leq \mid\mid x\mid\mid ^{2}.                           $$
Consequently, $ \left( \{f_{j}\}_{j \geq 1}, \{g_{j}\}_{j \geq 1} \right)  $ is a $K $-biframe for $ l^{2}\left(  \mathbb{C}\right). $ 
\end{example}
\begin{remark}
Two $K$-frames may not form a $K$-biframe.\\
Indeed, define
$K \in \mathcal{L}\left(\mathbb{C}^{r} \right) $ be defined as follows $$Ke_{1} = \sqrt{2}e_{1}\,\,and \,\, Ke_{j} = e_{j}, \,for \,\, 1\leq j \leq r.$$
where $\{e_{j}\}_{1\leq j \leq r}$ is an orthonormal basis for $\mathbb{C}^{r}. $ Thus
$$K^{\ast}e_{1}=\sqrt{2}e_{1}\,\, and\,\, K^{\ast}e_{j}=e_{j}, \,\, for \,\,1\leq j \leq r.$$
For $x=\left( x_{j}\right)_{1\leq j \leq r} \in \mathbb{C}^{r} $, we get
$$ \mid\mid K^{\ast}x\mid\mid^{2}=2\mid x_{1}\mid ^{2}+ \sum_{j=2}^{r}\mid x_{j}\mid ^{2}.$$
Now, Consider the sequences
$$\{\theta_{j}\}_{1\leq j \leq r} =\{e_{1}, e_{1}, e_{2},e_{2},..,e_{r}      \},\,\,and \,\, \{\psi_{j}\}_{1\leq j \leq r} =\{\dfrac{1}{2}e_{1}, -\dfrac{1}{2}e_{1}, e_{2},e_{2},..,e_{r}      \}. $$
Direct computations show that $\{\theta_{j}\}_{1\leq j \leq r} $ is a $K$-frame with bounds 1 and 2 and  $ \{\psi_{j}\}_{1\leq j \leq r} $
is a $K$-frame with bounds  $ \dfrac{1}{6}$ and 1.\\
On the other hand,  we have
$$ \sum_{j=1}^{r} \langle x,\theta_{j}\rangle \langle\psi_{j},x\rangle = \sum_{j=2}^{r} \mid\langle x,e_{j}\rangle \mid ^{2},$$
hence, if we set $x=e_{1}$, we obtain
$$ \sum_{j=1}^{r} \langle x,\theta_{j}\rangle \langle\psi_{j},x\rangle = 0  $$
Therefore,  $\left( \{\theta_{j}\}_{1\leq j \leq r}, \{\psi_{j}\}_{1\leq j \leq r}\right) $ is not a $K$-biframe for $\mathbb{C}^{r}.$
\end{remark}
The next theorem presents some operators that
preserve the $K$-biframe property of a given $K$-biframe.
\begin{theorem}
Let $L \in \mathcal{L}\left( \mathcal{H}\right) $ such that $KLK=K$ and $\left( \{x_{j}\}_{j\in \mathbb{J}}, \{y_{j}\}_{j\in \mathbb{J}}\right) $ be a $K$-biframe for $\mathcal{H}.$ Then $\left( \{\left( KL\right) x_{j}\}_{j\in \mathbb{J}}, \{\left( KL\right) y_{j}\}_{j\in \mathbb{J}}\right) $ is a $K$-biframe for $ \mathcal{H}$.
\end{theorem}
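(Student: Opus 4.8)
The plan is to reduce the assertion directly to the defining inequality of the original $K$-biframe, evaluated not at $x$ but at the transformed vector $u := (KL)^{\ast}x = L^{\ast}K^{\ast}x$. This single substitution should carry essentially the whole argument, with the hypothesis $KLK=K$ entering at exactly one point.

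First, for a fixed $x\in\mathcal{H}$ I would rewrite the central sum for the candidate pair by pushing the operator $KL$ off the frame vectors and onto $x$ via adjointness: since $\langle x, (KL)x_{j}\rangle = \langle (KL)^{\ast}x, x_{j}\rangle$ and $\langle (KL)y_{j}, x\rangle = \langle y_{j}, (KL)^{\ast}x\rangle$, we obtain
\begin{equation*}
\sum_{j\in\mathbb{J}}\langle x, (KL)x_{j}\rangle\langle (KL)y_{j}, x\rangle = \sum_{j\in\mathbb{J}}\langle u, x_{j}\rangle\langle y_{j}, u\rangle .
\end{equation*}
In particular, convergence of the series for the new pair is inherited from convergence of the series for $u$. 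Applying the $K$-biframe inequality for $\left(\{x_{j}\}_{j\in\mathbb{J}}, \{y_{j}\}_{j\in\mathbb{J}}\right)$ to the vector $u$ then yields
\begin{equation*}
\alpha\langle K^{\ast}u, K^{\ast}u\rangle \le \sum_{j\in\mathbb{J}}\langle u, x_{j}\rangle\langle y_{j}, u\rangle \le \beta\langle u, u\rangle .
\end{equation*}

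The decisive step, and the place where $KLK=K$ is indispensable, is the computation of $K^{\ast}u$. Taking adjoints in $KLK=K$ gives $K^{\ast}L^{\ast}K^{\ast}=K^{\ast}$, hence $K^{\ast}u = K^{\ast}L^{\ast}K^{\ast}x = (KLK)^{\ast}x = K^{\ast}x$. Therefore the left-hand term above is exactly $\alpha\langle K^{\ast}x, K^{\ast}x\rangle$, so the lower $K$-biframe bound is preserved with the \emph{same} constant $\alpha$. For the upper bound I would control $\langle u, u\rangle = \langle (KL)^{\ast}x, (KL)^{\ast}x\rangle$ by Lemma \ref{lem1.1}, which gives $\langle u, u\rangle \le \|(KL)^{\ast}\|^{2}\langle x, x\rangle = \|KL\|^{2}\langle x, x\rangle$. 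Combining the two estimates shows that $\left(\{(KL)x_{j}\}_{j\in\mathbb{J}}, \{(KL)y_{j}\}_{j\in\mathbb{J}}\right)$ is a $K$-biframe with lower bound $\alpha$ and upper bound $\beta\|KL\|^{2}$.

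I do not anticipate a genuine obstacle: the proof is a clean change of variable together with the two standard bounds (the $K$-biframe inequality and Lemma \ref{lem1.1}). The only point demanding care is the bookkeeping of adjoints — correctly identifying $u=(KL)^{\ast}x$ and verifying that $K^{\ast}u$ collapses to $K^{\ast}x$ — since this is precisely where $KLK=K$ is used and any slip there would destroy the lower estimate.
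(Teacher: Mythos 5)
Your proposal is correct and follows essentially the same route as the paper: both arguments apply the original $K$-biframe inequality at the vector $u=(KL)^{\ast}x$, use the adjoint of $KLK=K$ to collapse $K^{\ast}u$ to $K^{\ast}x$ for the lower bound, and invoke Lemma \ref{lem1.1} to get the upper bound $\beta\|KL\|^{2}\langle x,x\rangle$. Your write-up is in fact more explicit than the paper's about the change of variable and where the hypothesis $KLK=K$ enters.
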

	
\begin{proof}
First, there exist $ \alpha, \beta >0  $ such that, for all $x\in \mathcal{H}$
\begin{equation*}
\alpha\langle K^{\ast}x,K^{\ast} x\rangle \leq   \sum_{j\in\mathbb{J}}\langle x, x_{j}\rangle\langle y_{j},  x\rangle\leq \beta\langle x, x\rangle,  
\end{equation*}
Since
	$$KLK=K.$$
Thus
		$$ 	\alpha \langle \left( KLK\right)^{\ast}x, \left( KLK\right)^{\ast}x \rangle   \leq\sum_{j\in\mathbb{J}}  \langle x, \left( KL\right) x_{j}\rangle\langle \left( KL\right) y_{j}, x\rangle  \leq \beta \langle  \left( KL\right)^{\ast} x, \left( KL\right)^{\ast}x\rangle.$$
		By Lemma \ref{lem1.1}, we obtain
\begin{equation*}
			\alpha \langle K^{\ast}x,K^{\ast} x\rangle \leq   \sum_{j\in\mathbb{J}}\langle  x, \left( KL\right) x_{j}\rangle\langle\left(  KL\right)  y_{j}, x\rangle \leq \beta \mid\mid KL\mid\mid ^{2}\langle x, x\rangle.
\end{equation*}
Then,  $\left( \{\left( KL\right) x_{j}\}_{j\in \mathbb{J}}, \{\left( KL\right) y_{j}\}_{j\in \mathbb{J}}\right) $ is a $K$-biframe for $ \mathcal{H}$.
\end{proof}
	
\begin{example}
Let $K, L \in \mathcal{L}\left( \mathbb{C}^{2}\right)$ be defined as follows:
\begin{equation*}
			K=
			\begin{pmatrix}
				e^{ia} & e^{-ia} \\	
				0 &   0

			\end{pmatrix}
			and \,\,L=
			\begin{pmatrix}
				e^{-ia}  & 0\\	
				0 &   0.	
			\end{pmatrix}, \,\,
			where \,\, a \in \mathbb{R} \,\, and\,\, i^{2}=-1 .
\end{equation*}	
		For $x=\left( x_{1}, x_{2}\right) \in \mathbb{C}^{2}$, we have
		$$\mid\mid K^{\ast}x \mid\mid ^{2}= 2\mid x_{1}\mid ^{2}.$$
		By setting
		$$
		f_{j}= \left\{
		\begin{array}{ll}
			e^{ia} e_{1} & \mbox{if } j=1 \\
			e_{2} & \mbox{if } j=2 \\
		\end{array}
		\right.
		and \,\, 
		g_{j}= \left\{
		\begin{array}{ll}
			e^{ia} e_{1} & \mbox{if } j=1 \\
			2e_{2} & \mbox{if } j=2 \\
		\end{array}
		\right.$$
		We obtain
		$$ \sum_{j=1}^{2} \langle x,f_{j}\rangle\langle g_{j}, x\rangle = \mid x_{1}\mid^{2} + 2\mid x_{2}\mid^{2}.$$
		Thus
		$$  \dfrac{1}{2}\mid\mid K^{\ast}x \mid\mid ^{2} \leq\sum_{j=1}^{j=2} \langle x,f_{j}\rangle\langle g_{j}, x\rangle \leq 2\mid\mid x\mid \mid^{2}.                              $$
		Then, $\left( \{f_{1}, g_{1}\}, \{f_{2}, g_{2}\}\right) $ is a $K$-biframe for $\mathbb{C}^{2}.$\\
		By some straightforward computations, we obtain 
		$$ KL= 	
		\begin{pmatrix}
			1 & 0 \\	
			0 &   0	
		\end{pmatrix}. $$
		Thus
		$$  \sum_{j=1}^{2} \langle x,\left( KL\right) f_{j}\rangle \langle \left( KL
		\right) g_{j}, x\rangle=  \mid x_{1}\mid^{2}.                            $$
		Then
		$$\dfrac{1}{2}\mid\mid K^{\ast}x \mid\mid ^{2} \leq\sum_{j=1}^{2} \langle x,\left( KL\right)  f_{j}\rangle\langle \left( KL\right)  g_{j}, x\rangle  \leq \mid\mid x\mid \mid^{2}.  $$
		Consequently, $\left( \{\left( KL\right) x_{j}\}_{j\in \mathbb{J}}, \{\left( KL\right) y_{j}\}_{j\in \mathbb{J}}\right) $ is a $K$-biframe for $ \mathbb{C}^{2}.$
\end{example}
In what follows, we assume that $T$ is semi-regular such that $KT=TK$ and $\Omega $ is a connected component of $reg\left( T\right)$ and we agree to use the following notation
$$ T_{\lambda}=T-\lambda I \,\,and\,\, \mathcal{H}_{0} =R^{\infty}\left(T-\lambda_{0}I \right),\,\,\left(\lambda,  \lambda_{0} \in \Omega\right). $$
		
\begin{theorem}
Let $\left( \{x_{j}\}_{j\in \mathbb{J}},\{y_{j}\}_{j\in \mathbb{J}}\right)  $ be a $K$-biframe for $ \mathcal{H}.$  Then $\left( \{ T_{\lambda} x_{j}\}_{j\in \mathbb{J}},\{ T_{\lambda} y_{j}\}_{j\in \mathbb{J}}\right) $ is a $K$-biframe for $  \mathcal{H}_{0}, $ for every $ \lambda \in \Omega.$
\end{theorem}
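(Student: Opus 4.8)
The plan is to reduce the claim, for every $x \in \mathcal{H}_{0}$, to the $K$-biframe inequality already available on $\mathcal{H}$ applied to the transformed vector $T_{\lambda}^{\ast}x$. First I would move $T_{\lambda}$ off the frame vectors and onto $x$ by adjointability: since $\langle x, T_{\lambda}x_{j}\rangle = \langle T_{\lambda}^{\ast}x, x_{j}\rangle$ and $\langle T_{\lambda}y_{j}, x\rangle = \langle y_{j}, T_{\lambda}^{\ast}x\rangle$, one obtains
\begin{equation*}
\sum_{j\in\mathbb{J}}\langle x, T_{\lambda}x_{j}\rangle\langle T_{\lambda}y_{j}, x\rangle = \sum_{j\in\mathbb{J}}\langle T_{\lambda}^{\ast}x, x_{j}\rangle\langle y_{j}, T_{\lambda}^{\ast}x\rangle .
\end{equation*}
Feeding $T_{\lambda}^{\ast}x$ into the hypothesis that $\left(\{x_{j}\}, \{y_{j}\}\right)$ is a $K$-biframe sandwiches this sum between $\alpha\langle K^{\ast}T_{\lambda}^{\ast}x, K^{\ast}T_{\lambda}^{\ast}x\rangle$ and $\beta\langle T_{\lambda}^{\ast}x, T_{\lambda}^{\ast}x\rangle$. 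The upper bound is then immediate: Lemma \ref{lem1.1} gives $\langle T_{\lambda}^{\ast}x, T_{\lambda}^{\ast}x\rangle \leq \mid\mid T_{\lambda}\mid\mid^{2}\langle x, x\rangle$, so the sum is dominated by $\beta\mid\mid T_{\lambda}\mid\mid^{2}\langle x, x\rangle$, yielding the upper $K$-biframe bound with constant $\beta' = \beta\mid\mid T_{\lambda}\mid\mid^{2}$.

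The substance of the proof is the lower bound, and this is where the geometry of $\mathcal{H}_{0}$ is decisive. From $KT = TK$ one deduces $KT_{\lambda} = T_{\lambda}K$, hence $K^{\ast}T_{\lambda}^{\ast} = T_{\lambda}^{\ast}K^{\ast}$, so that $\langle K^{\ast}T_{\lambda}^{\ast}x, K^{\ast}T_{\lambda}^{\ast}x\rangle = \langle T_{\lambda}^{\ast}(K^{\ast}x), T_{\lambda}^{\ast}(K^{\ast}x)\rangle$. To recover $\langle K^{\ast}x, K^{\ast}x\rangle$ from this I would exploit that, on $\mathcal{H}_{0}$, the operator $T_{\lambda}$ is \emph{surjective}. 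Indeed, $\lambda \in \Omega \subseteq reg(T)$ forces $T_{\lambda}$ to be semi-regular, so $R^{\infty}(T_{\lambda})$ is closed and $R^{\infty}(T_{\lambda}) = T_{\lambda}\left(R^{\infty}(T_{\lambda})\right)$; Theorem \ref{th1.2} identifies $R^{\infty}(T_{\lambda})$ with $R^{\infty}(T_{\lambda_{0}}) = \mathcal{H}_{0}$, and combining these yields $T_{\lambda}(\mathcal{H}_{0}) = \mathcal{H}_{0}$, with $\mathcal{H}_{0}$ a closed submodule and hence a Hilbert $\mathcal{A}$-module. Since $K$ commutes with $T_{\lambda}$ it leaves $R^{\infty}(T_{\lambda}) = \mathcal{H}_{0}$ invariant, so $T_{\lambda}$ and $K$ restrict to commuting operators on $\mathcal{H}_{0}$; the surjectivity of $T_{\lambda}|_{\mathcal{H}_{0}}$ together with this commutativity gives $R\left(K|_{\mathcal{H}_{0}}\right) = R\left((T_{\lambda}K)|_{\mathcal{H}_{0}}\right)$, in particular $R\left(K|_{\mathcal{H}_{0}}\right) \subseteq R\left((T_{\lambda}K)|_{\mathcal{H}_{0}}\right)$. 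Lemma \ref{lem1.2}, applied on $\mathcal{H}_{0}$ to this inclusion, then produces a constant $c > 0$ with $c\,\langle K^{\ast}x, K^{\ast}x\rangle \leq \langle K^{\ast}T_{\lambda}^{\ast}x, K^{\ast}T_{\lambda}^{\ast}x\rangle$; chaining this with the lower $K$-biframe estimate delivers the lower bound with constant $\alpha' = \alpha c$, finishing the argument.

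I expect the main obstacle to be the rigorous handling of the restriction to $\mathcal{H}_{0}$ in the lower bound, rather than any single computation. Concretely, one must verify the closed-range hypothesis required by Lemma \ref{lem1.2} — which is where Theorem \ref{th1.1} on the product of commuting closed-range operators should be invoked — and confirm that the adjoint manipulation $K^{\ast}T_{\lambda}^{\ast} = T_{\lambda}^{\ast}K^{\ast}$ and the application of Lemma \ref{lem1.2} are legitimate on $\mathcal{H}_{0}$ and not merely on $\mathcal{H}$, since $K^{\ast}$ need not preserve $\mathcal{H}_{0}$. The engine that converts the qualitative semi-regularity hypotheses into a quantitative lower frame bound is precisely the surjectivity $T_{\lambda}(\mathcal{H}_{0}) = \mathcal{H}_{0}$, so I would take care to isolate and justify that identity before everything else.
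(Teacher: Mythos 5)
Your proposal follows essentially the same route as the paper's own proof: both reduce the sum to the biframe inequality evaluated at $T_{\lambda}^{\ast}x$, obtain the upper bound from Lemma \ref{lem1.1}, and obtain the lower bound by using $T_{\lambda}\left(\mathcal{H}_{0}\right)=\mathcal{H}_{0}$ (via Proposition \ref{prop1.2} and Theorem \ref{th1.2}) together with commutativity to get a range inclusion, then Theorem \ref{th1.1} for closedness of $R\left(T_{\lambda}K\right)$ and Lemma \ref{lem1.2} for the constant $\xi$. If anything, you are more careful than the paper, which silently applies Lemma \ref{lem1.2} globally even though the range inclusion is only established on $\mathcal{H}_{0}$ — the very subtlety you flag about $K^{\ast}$ not preserving $\mathcal{H}_{0}$.
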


\begin{proof}
Let  $ x \in \mathcal{R}_{0}.$ According to Theorem \ref{th1.2}, there exists $y \in \mathcal{R}_{0}$ such that
$$   x =T_{\lambda}\left( y\right),    $$
hence
$$  K\left( x\right)=\left( KT_{\lambda}\right)\left( y\right)=  \left( T_{\lambda}K\right)\left( y\right). $$
It follows from Theorem \ref{th1.1}, that $R\left( T_{\lambda}K\right) $ is closed.\\	By Lemma \ref{lem1.2}, there is exists $\xi>0$ such that
$$  \xi \langle K^{\ast}x,K^{\ast} x\rangle  \leq  \langle \left( T_{\lambda}K\right) ^{\ast}x,\left( T_{\lambda}K\right) ^{\ast}x\rangle.           $$
Since $\left( \{x_{j}\}_{j\in \mathbb{J}}, \{y_{j}\}_{j\in \mathbb{J}}\right)  $ is a  $K$-biframe  for $  \mathcal{H}$ with biframe bounds $ \alpha, \beta.$ Then
$$   \alpha \langle \left( T_{\lambda}K\right) ^{\ast}x,\left( T_{\lambda}K\right) ^{\ast}x\rangle  \leq\sum_{j\in \mathbb{J}}\langle  x, T_{\lambda} x_{j}\rangle\langle T_{\lambda} y_{j}, x\rangle\leq \beta  \langle T_{\lambda}^{\ast}x, T_{\lambda}^{\ast}x\rangle. $$
	Using Lemma \ref{lem1.2}, we obtain
$$     \langle T_{\lambda}^{\ast}x, T_{\lambda}^{\ast}x\rangle \leq \mid \mid T_{\lambda} \mid \mid ^{2} \langle x,x\rangle.                              $$
Therefore
$$ \alpha\xi \langle K^{\ast}x,K^{\ast}x\rangle \leq\sum_{j\in \mathbb{J}}\langle  x, T_{\lambda} x_{j}\rangle\langle T_{\lambda} y_{j}, x\rangle\leq \beta \mid \mid T_{\lambda} \mid \mid ^{2} \langle x, x\rangle.$$
This completes the proof.	
\end{proof}

In what follows, Consider  $T\in \mathcal{CR}\left( \mathcal{H}\right)$ and  we fix the following notation:
	$$\varphi\left( K\right) = K^{\dagger \ast}.$$

\begin{lemma} \label{lem2.1}
	Let $K \in\mathcal{CR}\left( \mathcal{H}\right) $. Then
	$$R\left(\varphi \left( K\right)  \right) =R\left( K\right). $$	
				
\end{lemma}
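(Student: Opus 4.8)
The plan is to reduce the statement to the two standard facts about the Moore--Penrose inverse recorded in Proposition \ref{prop1.2}, so that no genuine computation is needed. The starting observation is simply that $\varphi(K)=K^{\dagger\ast}$, and by part (3) of Proposition \ref{prop1.2} one has $K^{\dagger\ast}=K^{\ast\dagger}=(K^{\ast})^{\dagger}$. Thus it suffices to identify the range of the Moore--Penrose inverse of $K^{\ast}$.

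First I would record that $K^{\ast}$ again has closed range: since $K\in\mathcal{CR}(\mathcal{H})$, the range $R(K)$ is closed, and the closedness of $R(K)$ is equivalent to the closedness of $R(K^{\ast})$, so $(K^{\ast})^{\dagger}$ is well defined. Then I apply part (1) of Proposition \ref{prop1.2} to the operator $K^{\ast}$ in place of $T$: this gives $R\!\left((K^{\ast})^{\dagger}\right)=R\!\left((K^{\ast})^{\ast}\right)=R(K)$. Combining this with the identity of the first paragraph yields $R(\varphi(K))=R(K^{\ast\dagger})=R(K)$, which is exactly the claim.

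An equivalent route avoids part (3) and argues through orthogonal complements: writing $\varphi(K)=(K^{\dagger})^{\ast}$ and noting that $K^{\dagger}$ has closed range (because $R(K^{\dagger})=R(K^{\ast})$ is closed), part (1) applied to $K^{\dagger}$ gives $R\!\left((K^{\dagger})^{\ast}\right)=N(K^{\dagger})^{\perp}$, while part (2) gives $N(K^{\dagger})=R(K)^{\perp}$; hence $R(\varphi(K))=R(K)^{\perp\perp}=R(K)$, the last equality using that $R(K)$ is closed and orthogonally complemented.

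There is no substantial obstacle here; the only points requiring care are bookkeeping ones. I must make sure that each time I invoke Proposition \ref{prop1.2} the operator fed into it genuinely has closed range---this is why the closedness of $R(K^{\ast})$ (respectively $R(K^{\dagger})$) is noted explicitly---and that the adjoint/dagger identity $K^{\dagger\ast}=K^{\ast\dagger}$ is used in the correct direction. In the alternative route one extra care is needed, since in a Hilbert $\mathcal{C}^{\ast}$-module $R(K)^{\perp\perp}=R(K)$ is not automatic for arbitrary closed submodules; it holds here only because the closed range of an adjointable operator is orthogonally complemented. For this reason I would favor the first route, which sidesteps the double-complement issue entirely and relies solely on the range formulas already quoted.
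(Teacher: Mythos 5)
Your proposal is correct, and your ``alternative route'' is in fact the paper's own proof: the paper argues via the chain
$R(\varphi(K)) = R\bigl((K^{\dagger})^{\ast}\bigr) = N(K^{\dagger})^{\perp} = N(K^{\ast})^{\perp} = R(K)$,
i.e.\ parts (1) and (2) of Proposition~\ref{prop1.2} applied to $K^{\dagger}$ and $K$, followed by the identification $N(K^{\ast})^{\perp}=R(K)$. Your preferred route --- rewriting $\varphi(K)=K^{\dagger\ast}=K^{\ast\dagger}$ by part (3) and then applying part (1) directly to $K^{\ast}$ --- is a modest but genuine streamlining: it delivers $R\bigl((K^{\ast})^{\dagger}\bigr)=R\bigl((K^{\ast})^{\ast}\bigr)=R(K)$ in one step and entirely avoids the orthogonal-complement bookkeeping. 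Your caution about the complement route is well placed: the final equality in the paper's chain requires either part (1) applied to $K^{\ast}$ or the fact that $R(K)^{\perp\perp}=R(K)$, and the latter is not automatic for closed submodules of a Hilbert $C^{\ast}$-module; it holds here only because the closed range of an adjointable operator is orthogonally complemented. The paper passes over this point silently, so your remark flags a genuine (if standard) gap in the published argument rather than any defect in yours. The one piece of bookkeeping both routes need --- that $R(K^{\ast})$ is closed whenever $R(K)$ is, so that $(K^{\ast})^{\dagger}$ exists and Proposition~\ref{prop1.2} may legitimately be invoked for $K^{\ast}$ --- is something you state explicitly and the paper does not.
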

				
\begin{proof}
By Proposition \ref{prop1.2}, we have
$$R\left( \varphi\left( K\right) \right) = R\left( \left( K^{\dagger }\right) ^{\ast}\right)= N\left( K^{\dagger}\right) ^{\perp}= \left( N\left( K^{\ast}\right)\right) ^{\perp}= R\left( K\right).$$
The result is obtained.
	\end{proof}
				
Next, we aimed to describe some invariant subset of $ \mathcal{L}\left( \mathcal{H}\right)  $

\begin{theorem} \label{th2.3}
The following subset
$$ \Gamma=\{ K \in \mathcal{CR}\left( \mathcal{H}\right)  :\left( \{ x_{j} \}_{j\in \mathbb{J}}, \{ y_{j} \}_{j\in \mathbb{J}}\right)\,\, is\,\, a\,\, K-biframe \,\, for \,\, \mathcal{H}  \} $$
is $\varphi$-invariant.
Moreover, we have
$ \pi_{R\left( K\right) }\in \Gamma  $
\end{theorem}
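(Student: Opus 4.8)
The plan is to observe that the upper biframe bound $\sum_{j\in\mathbb{J}}\langle x,x_j\rangle\langle y_j,x\rangle\le\beta\langle x,x\rangle$ is independent of the operator, so membership in $\Gamma$ is governed entirely by the lower bound. Transferring the lower bound from $K$ to a target operator $S$ therefore reduces to comparing $\langle S^*x,S^*x\rangle$ with $\langle K^*x,K^*x\rangle$, and this comparison is exactly what Lemma \ref{lem1.2} delivers once the appropriate range inclusion is in hand.

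For the $\varphi$-invariance, I would fix $K\in\Gamma$ with bounds $\alpha,\beta$. Lemma \ref{lem2.1} gives $R(\varphi(K))=R(K)$, so $\varphi(K)$ has closed range (hence $\varphi(K)\in\mathcal{CR}(\mathcal{H})$) and $R(\varphi(K))\subseteq R(K)$. Applying Lemma \ref{lem1.2} with $T=\varphi(K)$ and $G=K$ furnishes $\mu>0$ with $\mu\langle\varphi(K)^*x,\varphi(K)^*x\rangle\le\langle K^*x,K^*x\rangle$ for all $x\in\mathcal{H}$. Multiplying by $\alpha$ and inserting the lower $K$-biframe estimate yields $\alpha\mu\langle\varphi(K)^*x,\varphi(K)^*x\rangle\le\sum_{j\in\mathbb{J}}\langle x,x_j\rangle\langle y_j,x\rangle$, while the upper bound carries over unchanged. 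Thus $(\{x_j\}_{j\in\mathbb{J}},\{y_j\}_{j\in\mathbb{J}})$ is a $\varphi(K)$-biframe, i.e. $\varphi(K)\in\Gamma$, and $\Gamma$ is $\varphi$-invariant.

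The final assertion follows by the same template with $S=\pi_{R(K)}$. Here $\pi_{R(K)}$ is the orthogonal projection onto the closed subspace $R(K)$, so it is self-adjoint, has closed range equal to $R(K)$, and satisfies $R(\pi_{R(K)})\subseteq R(K)$. Lemma \ref{lem1.2} then produces $\nu>0$ with $\nu\langle\pi_{R(K)}^*x,\pi_{R(K)}^*x\rangle\le\langle K^*x,K^*x\rangle$, and combining this with the lower $K$-biframe bound together with the unchanged upper bound shows $\pi_{R(K)}\in\Gamma$.

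The point demanding the most attention is the orientation when invoking Lemma \ref{lem1.2}: one must assign the target operator ($\varphi(K)$ or $\pi_{R(K)}$) the role of $T$ and keep $G=K$, so that the inclusion $R(T)\subseteq R(K)$ produces an upper estimate of $\langle T^*x,T^*x\rangle$ by $\langle K^*x,K^*x\rangle$, which is the direction needed to feed the lower frame inequality; the reverse assignment would be useless. Beyond this bookkeeping I expect no genuine obstacle, since $R(K)$ is closed throughout and Lemma \ref{lem2.1} supplies the range equality at no cost. Note in particular that the explicit identity $\varphi(K)^*=K^{\dagger}$ is never required, as only the range inclusion is used.
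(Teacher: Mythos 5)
Your proof is correct. For the $\varphi$-invariance claim you follow essentially the paper's own route: Lemma \ref{lem2.1} yields $R(\varphi(K)) = R(K)$, Lemma \ref{lem1.2} applied with $T=\varphi(K)$, $G=K$ yields $\mu\langle\varphi(K)^{\ast}x,\varphi(K)^{\ast}x\rangle\le\langle K^{\ast}x,K^{\ast}x\rangle$, and the lower bound transfers while the upper bound is untouched; you are in fact slightly more careful than the paper in recording that $R(\varphi(K))=R(K)$ is closed, so that $\varphi(K)\in\mathcal{CR}(\mathcal{H})$ as the definition of $\Gamma$ demands. Where you genuinely diverge is the second claim, $\pi_{R(K)}\in\Gamma$: the paper does not invoke Lemma \ref{lem1.2} a second time, but instead writes the projection as $\pi_{R(K)}=\varphi(K)K^{\ast}=\left(KK^{\dagger}\right)^{\ast}$, applies Lemma \ref{lem1.1} to obtain $\langle\pi_{R(K)}^{\ast}x,\pi_{R(K)}^{\ast}x\rangle\le\|K\|^{2}\langle\varphi(K)^{\ast}x,\varphi(K)^{\ast}x\rangle$, and chains this through the $\varphi(K)$ estimate already established in the first part (the resulting constant should be $\alpha\xi\|K\|^{-2}$; the paper's display shows $\alpha\|K\|^{-2}$, dropping the factor $\xi$, apparently a typo). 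You instead run the same Douglas-type template once more, with $T=\pi_{R(K)}$ and $G=K$, via the trivial inclusion $R(\pi_{R(K)})=R(K)\subseteq R(K)$. Both arguments are valid; yours is more uniform (one template covers both claims) and bypasses $\varphi(K)$ entirely in the second claim, whereas the paper's identity $\pi_{R(K)}=\varphi(K)K^{\ast}$ has the side benefit of exhibiting the projection as a product of adjointable operators, which in the Hilbert $C^{\ast}$-module setting is what guarantees that $\pi_{R(K)}$ exists and is adjointable at all --- a fact you assert (self-adjointness, closed range) rather than justify, though it does hold because the closed range of an adjointable operator is orthogonally complemented.
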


\begin{proof}
Let $K \in \Gamma$, there exist $\alpha, \beta >0$ such that
\begin{equation*}
\alpha \langle K^{\ast}x,K ^{\ast}x\rangle \leq\sum_{j\in\mathbb{J}}\langle x,x_{j}\rangle \langle y_{j},x\rangle \leq \beta\langle x, x\rangle, \,\, \left( x \in  \mathcal{H}\right) .  
\end{equation*}
By Lemma \ref{lem1.2} and Lemma \ref{lem1.1}, there exists $\xi >0$ such that
$$ \xi \langle \varphi\left( K\right) ^{\ast}x,\varphi\left( K\right) ^{\ast}x\rangle \leq \langle K^{\ast}x,K ^{\ast}x\rangle $$
This implies
\begin{equation*}
\alpha \xi \langle \varphi\left( K\right) ^{\ast}x,\varphi\left( K\right)  ^{\ast}x\rangle \leq\sum_{j\in\mathbb{J}}\langle x,x_{j}\rangle \langle y_{j},x\rangle \leq \beta\langle x, x\rangle.
\end{equation*}
Therefore $$\varphi\left( \Gamma\right) \subset \Gamma $$\\
 by Lemma \ref{lem1.1}, we have, For $x \in \mathcal{H}$
$$ \langle \left(\varphi\left( K\right)K^{\ast} \right) ^{\ast}x ,\left(\varphi\left( K\right)K^{\ast} \right) ^{\ast}x\rangle \leq \mid\mid K \mid\mid^{2} \langle \left(\varphi\left( K\right) \right) ^{\ast}x ,\left(\varphi\left( K\right) \right) ^{\ast}x\rangle. $$
Since
$$  \varphi\left( K\right)K^{\ast} = \left( KK^{\dagger}\right) ^{\ast}=\left( \pi_{R\left( K\right) }\right)^{\ast}= \pi_{R\left( K\right) }.$$
Thus
\begin{equation*}
\alpha  \mid\mid K \mid\mid^{-2}\langle P_{R\left( K\right) }^{\ast}x,P_{R\left( K\right) }^{\ast}x\rangle  \leq\sum_{j\in\mathbb{J}}\langle x,x_{j}\rangle \langle y_{j},x\rangle \leq \beta \langle x,x\rangle.
\end{equation*}	
	Therefore  $ \pi_{R\left( K\right) } \in \Gamma $
\end{proof}
	
\begin{proposition}
Let $\left( \{x_{j}\}_{j\in \mathbb{J}},\{y_{j}\}_{j\in \mathbb{J}}\right) $ be a $K$-biframe for $\mathcal{H}$  and $U \in \mathcal{L}\left( \mathcal{H}\right) $ be  unitary. Then
$\left( \{Ux_{j}\}_{j\in \mathbb{J}}, \{Uy_{j}\}_{j\in \mathbb{J}} \right) $ is a $\left( U\varphi\left( K\right) U^{\ast}\right) $-biframe for $\mathcal{H}.$
\end{proposition}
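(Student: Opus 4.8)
The plan is to reduce everything to the $K$-biframe inequality for the original pair evaluated at the vector $z=U^{\ast}x$, and then to transport the lower bound through $\varphi$ exactly as was done in Theorem \ref{th2.3}; conceptually the statement is just conjugation-invariance of the biframe property combined with the $\varphi$-invariance already established. First I would rewrite the central sum using adjointability of $U$. For every $x\in\mathcal{H}$ one has $\langle x, Ux_{j}\rangle=\langle U^{\ast}x, x_{j}\rangle$ and $\langle Uy_{j}, x\rangle=\langle y_{j}, U^{\ast}x\rangle$, so with $z=U^{\ast}x$,
$$\sum_{j\in\mathbb{J}}\langle x, Ux_{j}\rangle\langle Uy_{j}, x\rangle=\sum_{j\in\mathbb{J}}\langle z, x_{j}\rangle\langle y_{j}, z\rangle,$$
which is precisely the biframe quadratic form of $(\{x_{j}\},\{y_{j}\})$ at $z$. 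Applying the $K$-biframe inequality at $z$ with bounds $\alpha,\beta$ then sandwiches this sum between $\alpha\langle K^{\ast}z, K^{\ast}z\rangle$ and $\beta\langle z, z\rangle$.

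The upper estimate is immediate from unitarity: $\langle z, z\rangle=\langle U^{\ast}x, U^{\ast}x\rangle=\langle x, UU^{\ast}x\rangle=\langle x, x\rangle$, so the upper bound becomes $\beta\langle x, x\rangle$ with the same constant. The lower estimate is the one substantive step. Since $K\in\mathcal{CR}(\mathcal{H})$, the range $R(K)$ is closed, and Lemma \ref{lem2.1} gives $R(\varphi(K))=R(K)$; hence Lemma \ref{lem1.2} applied with $T=\varphi(K)$ and $G=K$ produces a constant $\xi>0$ with $\xi\langle\varphi(K)^{\ast}z,\varphi(K)^{\ast}z\rangle\leq\langle K^{\ast}z, K^{\ast}z\rangle$. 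This is exactly the inequality recorded in the proof of Theorem \ref{th2.3}, so nothing new is required beyond evaluating it at $z$.

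It then remains to identify $\langle\varphi(K)^{\ast}z,\varphi(K)^{\ast}z\rangle$ with the defining lower form of a $(U\varphi(K)U^{\ast})$-biframe. Using $(U\varphi(K)U^{\ast})^{\ast}=U\varphi(K)^{\ast}U^{\ast}$ and cancelling the outer unitary inside the $\mathcal{A}$-valued inner product,
$$\langle (U\varphi(K)U^{\ast})^{\ast}x,(U\varphi(K)U^{\ast})^{\ast}x\rangle=\langle\varphi(K)^{\ast}U^{\ast}x,\varphi(K)^{\ast}U^{\ast}x\rangle=\langle\varphi(K)^{\ast}z,\varphi(K)^{\ast}z\rangle.$$
Chaining the three inequalities yields $\alpha\xi\langle (U\varphi(K)U^{\ast})^{\ast}x,(U\varphi(K)U^{\ast})^{\ast}x\rangle\leq\sum_{j\in\mathbb{J}}\langle x, Ux_{j}\rangle\langle Uy_{j}, x\rangle\leq\beta\langle x, x\rangle$, which says that $(\{Ux_{j}\},\{Uy_{j}\})$ is a $(U\varphi(K)U^{\ast})$-biframe with bounds $\alpha\xi$ and $\beta$. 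I do not expect a genuine obstacle here: the only place demanding care is the bookkeeping of adjoints and the repeated cancellation of $U$ via $U^{\ast}U=UU^{\ast}=I$ inside the inner products, together with correctly matching the conjugated operator $U\varphi(K)U^{\ast}$ to the lower form.
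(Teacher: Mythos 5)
Your proof is correct, but it takes a genuinely different route from the paper's. The paper first establishes the Moore--Penrose conjugation identity $\left( UK^{\ast}U^{\ast}\right) ^{\dagger}=UK^{\ast\dagger}U^{\ast}$, hence $\varphi\left( UKU^{\ast}\right) =U\varphi\left( K\right) U^{\ast}$; it then shows (by essentially the same substitution $z=U^{\ast}x$ that you use, though written very tersely) that $\left( \{Ux_{j}\},\{Uy_{j}\}\right) $ is a $\left( UKU^{\ast}\right) $-biframe, and finally invokes the $\varphi$-invariance of the set $\Gamma$ from Theorem \ref{th2.3}, applied to the operator $UKU^{\ast}$ and the transformed pair, to convert this into the $\left( U\varphi\left( K\right) U^{\ast}\right) $-biframe property. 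You instead apply $\varphi$ first and conjugate second: you transport the lower bound from $K^{\ast}$ to $\varphi\left( K\right) ^{\ast}$ directly via Lemma \ref{lem2.1} and Lemma \ref{lem1.2} (inlining the one inequality from the proof of Theorem \ref{th2.3} that you actually need), and then identify $\langle \varphi\left( K\right) ^{\ast}z,\varphi\left( K\right) ^{\ast}z\rangle$ with the lower form of $U\varphi\left( K\right) U^{\ast}$ by cancelling $U$ inside the $\mathcal{A}$-valued inner product. The payoff of your route is that you never need the identity $\varphi\left( UKU^{\ast}\right) =U\varphi\left( K\right) U^{\ast}$ --- a nontrivial fact the paper only asserts via ``direct computations'' --- and you end with explicit bounds $\alpha\xi$ and $\beta$; what the paper's route buys is the intermediate statement of independent interest that unitary conjugation preserves the $K$-biframe property (the pair is a $\left( UKU^{\ast}\right) $-biframe), together with a presentation of the proposition as an instance of the $\varphi$-invariance machinery of Theorem \ref{th2.3}. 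One shared bookkeeping point: $K$ must lie in $\mathcal{CR}\left( \mathcal{H}\right) $ for $\varphi\left( K\right) $ to be defined and for Lemma \ref{lem1.2} to apply, which you correctly flag and which is implicit in the paper's standing convention preceding Lemma \ref{lem2.1}.
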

\begin{proof}
Direct computations show that $$\left( UK^{\ast}U^{\ast}\right) ^{\dagger}=UK^{\ast\dagger}U^{\ast}. $$ This implies, 
$$\varphi\left( UKU^{\ast}\right) = \left( UKU^{\ast}\right) ^{\ast\dagger}= U\varphi\left( K\right) U^{\ast}.$$ 
For $x\in \mathcal{H},$ we have 
$$\langle \left( UKU^{\ast}\right) ^{\ast}x,\left( UKU^{\ast}\right) ^{\ast}x\rangle \leq \langle \left( K^{\ast}U^{\ast}\right) x,\left( K^{\ast}U^{\ast}\right)x\rangle $$
Thus, there exist $\alpha, \beta>0 $ such that
$$ \alpha \langle \left( UKU^{\ast}\right) ^{\ast}x,\left( UKU^{\ast}\right) ^{\ast}x\rangle \leq \sum_{j \in \mathbb{J}}\langle x,Ux_{j}\rangle \langle \left( Uy_{j}\right) , x\rangle   \leq \beta \langle x, x\rangle.  $$
Then, $\left( \{Ux_{j}\}_{j\in \mathbb{J}}, \{Uy_{j}\}_{j\in \mathbb{J}} \right) $ is a  $\left( UKU^{\ast}\right)$-frame for $\mathcal{H}.$\\
		Since
$$\varphi\left(  UKU^{\ast}\right) =U\varphi\left( K\right) U^{\ast}.$$
By Theorem \ref{th2.3}, $\left( \{Ux_{j}\}_{j\in \mathbb{J}}, \{Uy_{j}\}_{j\in \mathbb{J}} \right) $ is a $\left( U\varphi\left( K\right) U^{\ast}\right) $-biframe for $\mathcal{H}.$
\end{proof}

\begin{proposition} \label{prop2.2}
Let $T\in \mathcal{L}\left( \mathcal{H}\right) $ be EP. Then
		 $\varphi\left( T\right) $ is EP too.
\end{proposition}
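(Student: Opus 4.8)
The plan is to reduce the entire statement to the range identities already in hand, namely Lemma \ref{lem2.1}, the Moore--Penrose version of Proposition \ref{prop1.2}, and the defining EP symmetry $R(T)=R(T^{\ast})$. Recall that $\varphi(T)=T^{\dagger\ast}$, so to certify that $\varphi(T)$ is EP I must verify two things: that $R(\varphi(T))$ is closed, and that $R(\varphi(T))=R(\varphi(T)^{\ast})$. Since $T$ is EP, its range is closed and $T^{\dagger}$ (hence $\varphi(T)=T^{\dagger\ast}$) is well defined, so the claim is meaningful at the outset.

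First I would settle closedness and simultaneously pin down $R(\varphi(T))$. By Lemma \ref{lem2.1} applied to $K=T$, we have $R(\varphi(T))=R(T)$; as $T$ is EP, $R(T)$ is closed, whence $R(\varphi(T))$ is closed too. Next I would compute the adjoint of $\varphi(T)$. Because the adjoint is involutive, $\varphi(T)^{\ast}=(T^{\dagger\ast})^{\ast}=T^{\dagger}$, and then clause (1) of the Moore--Penrose Proposition \ref{prop1.2} gives $R(T^{\dagger})=R(T^{\ast})$, so that $R(\varphi(T)^{\ast})=R(T^{\ast})$.

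Finally I would assemble the pieces. Chaining the three identities $R(\varphi(T))=R(T)$, the EP hypothesis $R(T)=R(T^{\ast})$, and $R(\varphi(T)^{\ast})=R(T^{\ast})$ yields
$$R(\varphi(T))=R(T)=R(T^{\ast})=R(\varphi(T)^{\ast}),$$
which together with the closedness established above shows that $\varphi(T)$ is EP. I do not expect a genuine obstacle here: the argument is essentially bookkeeping with range identities, and the EP property transfers precisely because the symmetry $R(T)=R(T^{\ast})$ is exactly what interchanges the two sides. The only point requiring a little care is correctly identifying $\varphi(T)^{\ast}=T^{\dagger}$ and invoking the correct clause of Proposition \ref{prop1.2}; once that adjoint is computed, the conclusion is immediate.
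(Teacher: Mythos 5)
Your proposal is correct and follows essentially the same route as the paper: both arguments reduce the claim to the chain of range identities $R(\varphi(T))=R(T)=R(T^{\ast})=R(\varphi(T)^{\ast})$ using Lemma \ref{lem2.1} and Proposition \ref{prop1.2}. The only cosmetic difference is that the paper obtains $R(\varphi(T)^{\ast})=R(T^{\ast})$ by writing $\varphi(T)^{\ast}=\varphi(T^{\ast})$ and applying Lemma \ref{lem2.1} to $T^{\ast}$, whereas you identify $\varphi(T)^{\ast}=T^{\dagger}$ and invoke clause (1) of Proposition \ref{prop1.2} directly (and you also make the closedness of $R(\varphi(T))$ explicit, which the paper leaves implicit).
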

	
\begin{proof}
By Lemma \ref{lem2.1}, we obtain
$$  R\left( \varphi\left( T\right)^{\ast} \right) =R\left( \varphi\left( T^{\ast}\right) \right)=R\left( T^{\ast}\right)$$
Since $T$ is EP, we have
$$  R\left( \varphi\left( T\right)^{\ast} \right)=  R\left( \varphi\left( T\right) \right)  $$
Therefore, $\varphi\left( T\right) $ is EP.
	\end{proof}

  In the next, for given an appropriate operator $T$, we intend to construct some $K$-biframes for $R (T ).$
\begin{theorem} \label{theo2.4}
Let $T$ be EP such that $ T^{\ast}K=KT^{\ast} $ and $\left( \{ x_{j} \}_{j\in \mathbb{J}}, \{ y_{j} \}_{j\in \mathbb{J}}\right)$ be a $K$-biframe for $\mathcal{H}.$ Then  $\left( \{\varphi\left( T\right) x_{j} \}_{j\in \mathbb{J}}, \{\varphi\left( T\right) y_{j} \}_{j\in \mathbb{J}}\right) $ is a $K$-biframe for $ R\left( T\right).$	
\end{theorem}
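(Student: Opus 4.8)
The plan is to pull everything back to the $K$-biframe inequality for the original pair, evaluated at the vector $T^{\dagger}x$, and then to control the two boundary terms that appear. First I would record the adjoint identity $\varphi(T)^{\ast}=(T^{\dagger\ast})^{\ast}=T^{\dagger}$, so that $\langle x,\varphi(T)x_{j}\rangle=\langle T^{\dagger}x,x_{j}\rangle$ and $\langle \varphi(T)y_{j},x\rangle=\langle y_{j},T^{\dagger}x\rangle$ for every $j$; note also that $R(\varphi(T))=R(T)$ by Lemma \ref{lem2.1}, so the new sequences genuinely lie in $R(T)$, and $\varphi(T)$ is again EP by Proposition \ref{prop2.2}. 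Summing the rewritten terms and feeding $T^{\dagger}x$ into the defining inequality of $(\{x_{j}\},\{y_{j}\})$ gives
$$\alpha\,\langle K^{\ast}T^{\dagger}x,K^{\ast}T^{\dagger}x\rangle\leq \sum_{j}\langle x,\varphi(T)x_{j}\rangle\langle \varphi(T)y_{j},x\rangle\leq \beta\,\langle T^{\dagger}x,T^{\dagger}x\rangle.$$
The upper estimate is then immediate: Lemma \ref{lem1.1} yields $\langle T^{\dagger}x,T^{\dagger}x\rangle\leq \|T^{\dagger}\|^{2}\langle x,x\rangle$, so the right-hand side is bounded by $\beta\|T^{\dagger}\|^{2}\langle x,x\rangle$.

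The lower estimate is the heart of the matter, since it requires undoing the contraction caused by $T^{\dagger}$. Taking adjoints in the hypothesis $T^{\ast}K=KT^{\ast}$ gives $K^{\ast}T=TK^{\ast}$, so $K^{\ast}$ commutes with the closed-range operator $T$, and Theorem \ref{th1.3} then forces $K^{\ast}$ to commute with $T^{\dagger}$ as well, whence $K^{\ast}T^{\dagger}x=T^{\dagger}K^{\ast}x$. Next, for $x\in R(T)$, writing $x=Ty$ shows $K^{\ast}x=TK^{\ast}y\in R(T)$, i.e.\ $K^{\ast}$ leaves $R(T)$ invariant. Since $TT^{\dagger}w=w$ for every $w\in R(T)$, applying Lemma \ref{lem1.1} to $w=K^{\ast}x$ gives $\langle K^{\ast}x,K^{\ast}x\rangle=\langle TT^{\dagger}K^{\ast}x,TT^{\dagger}K^{\ast}x\rangle\leq \|T\|^{2}\langle T^{\dagger}K^{\ast}x,T^{\dagger}K^{\ast}x\rangle$, that is, $\langle K^{\ast}T^{\dagger}x,K^{\ast}T^{\dagger}x\rangle=\langle T^{\dagger}K^{\ast}x,T^{\dagger}K^{\ast}x\rangle\geq \|T\|^{-2}\langle K^{\ast}x,K^{\ast}x\rangle$.

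Combining the two estimates yields, for all $x\in R(T)$,
$$\alpha\|T\|^{-2}\langle K^{\ast}x,K^{\ast}x\rangle\leq \sum_{j}\langle x,\varphi(T)x_{j}\rangle\langle \varphi(T)y_{j},x\rangle\leq \beta\|T^{\dagger}\|^{2}\langle x,x\rangle,$$
which is precisely the statement that $(\{\varphi(T)x_{j}\},\{\varphi(T)y_{j}\})$ is a $K$-biframe for $R(T)$. The one genuinely delicate step is the lower bound: it depends on transporting the commutation $T^{\ast}K=KT^{\ast}$ through the Moore--Penrose inverse by Theorem \ref{th1.3}, together with the invariance of $R(T)$ under $K^{\ast}$ and the identity $TT^{\dagger}|_{R(T)}=\mathrm{id}$; everything else reduces to Lemmas \ref{lem1.1} and \ref{lem2.1}.
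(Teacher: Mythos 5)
Your proof is correct, and its decisive step is genuinely different from the paper's. Both arguments share the same skeleton: rewrite $\sum_{j}\langle x,\varphi(T)x_{j}\rangle\langle\varphi(T)y_{j},x\rangle$ as the original biframe sum evaluated at $\varphi(T)^{\ast}x=T^{\dagger}x$, and settle the upper bound with Lemma \ref{lem1.1}. They diverge on the lower bound. The paper applies Theorem \ref{th1.3} to $T^{\ast}$ to obtain $K\varphi(T)=\varphi(T)K$, shows that $K$ maps $R(T)$ into $R(\varphi(T)K)$, invokes Theorem \ref{th1.1} to conclude that $R(\varphi(T)K)$ is closed, and then extracts an unspecified constant $\xi>0$ from the Douglas-type Lemma \ref{lem1.2}; it also calls on Proposition \ref{prop2.2} (hence on the EP hypothesis) to place $\varphi(T)^{\dagger}x$ in $R(T)$. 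You instead apply Theorem \ref{th1.3} to $T$ itself to commute $K^{\ast}$ past $T^{\dagger}$, observe that $K^{\ast}$ leaves $R(T)$ invariant, and exploit the identity $TT^{\dagger}w=w$ for $w\in R(T)$ together with Lemma \ref{lem1.1}. This buys you three things: explicit biframe bounds $\alpha\|T\|^{-2}$ and $\beta\|T^{\dagger}\|^{2}$, where the paper's lower constant $\alpha\xi$ is non-effective; no appeal to Lemma \ref{lem1.2} or Theorem \ref{th1.1}, hence no need for $K$ to have closed range, a hypothesis the paper's proof tacitly relies on although the theorem statement does not list it; and in fact no use of the EP property at all (your remark that $\varphi(T)$ is EP via Proposition \ref{prop2.2} is never exploited), so your argument proves the conclusion under the formally weaker assumptions that $R(T)$ is closed and $T^{\ast}K=KT^{\ast}$. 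The only cosmetic caveat is the degenerate case $T=0$, where $\|T\|^{-2}$ is undefined but the claim is vacuous since $R(T)=\{0\}$.
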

\begin{proof}
Let $x\in R\left( \varphi\left( T\right)\right) ,$ we have 
\begin{equation*}
K\left( x\right)=K\left(\varphi\left( T\right)\varphi\left( T\right) ^{\dagger} x\right).
\end{equation*} 
		By Theorem \ref{th1.3}, we get
		$$  \varphi\left( T\right)K=K\varphi\left( T\right).  $$
		Hence
		$$K\left( x\right)=\left(\varphi\left( T\right)K \right)\left( \varphi\left( T\right) ^{\dagger} x\right).$$
		It follows from Theorem \ref{th1.1}, that  $R\left(  \varphi\left( T\right)K\right) $ is closed.\\   
		Using Proposition \ref{prop2.2}, we obtain
			$$\varphi\left( T\right)^{\dagger}x \in R\left( \varphi \left( T\right)^{\ast}\right) =R\left( T\right) .$$                         
		By Lemma \ref{lem1.2}, there exist $\xi>0$ such that
		\begin{equation*}
			\xi\langle K^{\ast} x,K^{\ast}x\rangle\leq \langle \left( \varphi\left( T\right)K\right) ^{\ast} x,\left( \varphi\left( T\right)K\right) ^{\ast}x\rangle
		\end{equation*}
		and by Lemma \ref{lem1.1}, we have
		$$  \langle \varphi \left( T\right)^{\ast}x, \varphi \left( T\right)^{\ast}x\rangle \leq \mid\mid T^\dagger\mid\mid ^{2}\langle x, x\rangle .            $$
		Since, $\left( \{x_{j}\}_{j\in \mathbb{J}}, \{y_{j}\}_{j\in \mathbb{J}}\right)  $ is a $K$-biframes for $\mathcal{H},$ there exist $\alpha, \beta > 0$ such that
		$$\alpha  \langle K^{\ast}x,K^{\ast} x\rangle \leq\sum_{j\in\mathbb{J}}\langle x,x_{j}\rangle\langle y_{j},x\rangle   \leq \beta  \langle x, x\rangle.$$
		Thus
		$$\alpha \langle \left( \varphi \left( T\right)K\right) ^{\ast}x, \left( \varphi \left( T\right)K\right) ^{\ast}x\rangle \leq \sum_{j\in\mathbb{J}}\langle x,\varphi\left( T\right)x_{j}\rangle \langle \varphi\left( T\right)y_{j}, x\rangle  \leq \beta  \langle \varphi \left( T\right)^{\ast}x, \varphi \left( T\right)^{\ast}x\rangle  . $$
		By Lemma \ref{lem1.1}, we get
		$$        \langle \varphi \left( T\right)^{\ast}x, \varphi \left( T\right)^{\ast}x\rangle \leq \mid\mid T^\dagger\mid\mid ^{2}  \langle x, x\rangle      $$
		Therefore
		$$  \alpha\xi \langle K^{\ast} x,K^{\ast}x\rangle \leq \sum_{j\in\mathbb{J}}\langle x,\varphi\left( T\right) x_{j}\rangle \langle  \varphi\left( T\right) y_{j} ,x\rangle  \leq \beta \mid\mid T^\dagger\mid\mid ^{2}  \langle x, x\rangle.$$
		This completes the proof. 
	\end{proof}
	
	By assumption of Theorem \ref{theo2.4}, we obtain that
\begin{corollary}
Let $T$ be a normal operator. Then   $\left( \{ \varphi\left( T\right)x_{j} \}_{j\in \mathbb{J}}, \{ \varphi\left( T\right)y_{j} \}_{j\in \mathbb{J}}\right)$ is a $K$-biframe for $R\left( T\right).$   
\end{corollary}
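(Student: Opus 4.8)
The plan is to obtain this corollary as an immediate specialization of Theorem~\ref{theo2.4}, the only real task being to check that a normal operator satisfies the hypotheses imposed there. Since the statement reads ``By assumption of Theorem~\ref{theo2.4}'', I keep the standing data of that theorem in force: the pair $\left( \{x_{j}\}_{j\in\mathbb{J}}, \{y_{j}\}_{j\in\mathbb{J}}\right)$ is a $K$-biframe for $\mathcal{H}$, the commuting relation $T^{\ast}K=KT^{\ast}$ holds, and, by the convention fixed just before Lemma~\ref{lem2.1}, $T\in\mathcal{CR}\left(\mathcal{H}\right)$, so that $T$ has closed range.

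The single point to verify is that a normal operator with closed range is EP. This is exactly the content of Proposition~\ref{prop1.4}: applying it to $T$ gives $R(T)=R(T^{\ast})$, hence $T$ is an EP operator. With this in hand, every hypothesis of Theorem~\ref{theo2.4} is now met, namely that $T$ is EP, that $T^{\ast}K=KT^{\ast}$, and that $\left( \{x_{j}\}_{j\in\mathbb{J}}, \{y_{j}\}_{j\in\mathbb{J}}\right)$ is a $K$-biframe for $\mathcal{H}$.

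Invoking Theorem~\ref{theo2.4} then yields at once that $\left( \{\varphi(T)x_{j}\}_{j\in\mathbb{J}}, \{\varphi(T)y_{j}\}_{j\in\mathbb{J}}\right)$ is a $K$-biframe for $R(T)$, which is the assertion. I do not anticipate any genuine obstacle here: the corollary is a pure corollary, and the only substantive input, the promotion of normality to the EP property, is handled verbatim by Proposition~\ref{prop1.4}. If explicit constants are wanted, the lower and upper bounds can be read off from the proof of Theorem~\ref{theo2.4} as $\alpha\xi$ and $\beta\,\mid\mid T^{\dagger}\mid\mid^{2}$, where $\alpha,\beta$ are the original biframe bounds and $\xi$ is the constant furnished by Lemma~\ref{lem1.2}.
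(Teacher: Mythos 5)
Your proposal is correct and matches the paper's proof exactly: the paper also deduces the corollary by invoking Proposition~\ref{prop1.4} to upgrade normality (together with the standing closed-range convention) to the EP property, and then applies Theorem~\ref{theo2.4} with its remaining hypotheses kept in force. Your write-up simply makes explicit the details that the paper's one-line proof leaves implicit.
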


\begin{proof}
It follows from  Proposition \ref{prop1.4}.
\end{proof}

\medskip

\section*{Declarations}

\medskip

\noindent \textbf{Availablity of data and materials}\newline
\noindent Not applicable.

\medskip

\noindent \textbf{Competing  interest}\newline
\noindent The authors declare that they have no competing interests.

\medskip

\noindent \textbf{Fundings}\newline
\noindent  Authors declare that there is no funding available for this article.

\medskip

\noindent \textbf{Authors' contributions}\newline
\noindent The authors equally conceived of the study, participated in its
design and coordination, drafted the manuscript, participated in the
sequence alignment, and read and approved the final manuscript. 

\medskip


\begin{thebibliography}{99}
		
	
		\bibitem{AF} P. Aiena, Fredholm and Local Spectral Theory with Applications to Multipliers, Kluwer.Acad.Press, $\left( 2004\right) $
		
		\bibitem{Assila} Assila, N., Labrigui, H., Touri, A. et al. Integral operator frames on Hilbert 
		$C^{\ast}$-modules. Ann Univ Ferrara (2024). \url{https://doi.org/10.1007/s11565-024-00501-z}
		
		\bibitem{israel} A. Ben-Israel, T. N. E. Greville, Generalized inverses. Theory and applications, Second edition, Canadian Mathematical Society, Springer-Verlag, New York, 2003.
		
		
	  
		
		\bibitem{Ch}  O. Christensen, An introduction to frames and Riesz bases. Applied and numerical harmonic analysis. Birkh$\ddot{a}$user Boston Inc, Boston (2003).
		
	   \bibitem{DG} I. Daubechies, A. Grossmann, Y. Meyer, Painless non orthogonal expansions, J. Math.	Phys. 27(1986) 1271-1283.
	
		\bibitem{Duff}  R. J. Duffin, A. C. Schaeer, A class of nonharmonic fourier series, Trans. Amer. Math. Soc. 72 (1952), 341-366.
			
		\bibitem{EL}  Y. C. Eldar, Sampling with arbitrary sampling and reconstruction spaces and oblique dual frame vectors, J. Fourier. Anal. Appl. 9(1) (2003), 77-96.

		\bibitem{Alijani biframe} M. Firouzi Parizi, A. Alijani and M. Ali Dehghan, Biframes and some of their properties,  J Inequal Appl 2022, 104 (2022).
		
		\bibitem{FF} P. Ferreira, Mathematics for multimedia signal processing II: Discrete finite frames and signal reconstruction,
		Byrnes, J.S. (ed.) 17(1999), 35-54.
		
		\bibitem{FL} M. Frank,  D.R. Larson, Frames in Hilbert $\mathbb{C}^{\ast}$-modules and $\mathbb{C}^{\ast}$ algebra, J. Operator Theory. (2002), 48, 273-314. 
		
			\bibitem{Ghiati} Ghiati, M., Rossafi, M., Mouniane, M. et al. Controlled continuous 
			$\ast$-g-frames in Hilbert 
			$C^{\ast}$-modules. J. Pseudo-Differ. Oper. Appl. 15, 2 (2024). \\
			\url {https://doi.org/10.1007/s11868-023-00571-1}
		
		\bibitem{Gav}  L. G$\check{a}$vruta, Frames for operators. Appl. Comput. Harmon. Anal. 32(1), 139-144 (2012).
		
		\bibitem{HARTE} R. Harte, M. Mbekhta, On generalized inverses in $\mathbb{C}^{\ast}$-algebras. Studia Math. 103(1),71-77 $\left(1992 \right). $
		
        \bibitem{MMM} M. Jalaeian, M.M. Karizaki, H. Mahmoud, Conditions that the product of operators is an EP operator in Hilbert $C^*$-module, Linear 
	     Multilinear Algebra {\bf 68} (2020), no. 10, 1990-2004. 
		
		
		\bibitem{karara }	A. Karara and M. Rossafi, Biframes in $ C^{\ast}$-Hilbert modules,
		, preprint arXiv:2312.15351, 2023
		
	
		
		\bibitem{Lab} J.P. Labrousse, Les op\'{e}rateurs quasi-Fredholm., Rend. Circ. Mat. Palermo, XXIX 2, 161-258 $\left( 1980\right) $ 
					
	  \bibitem{MANUI} V. M. Manuilov and E. V. Troitsky, Hilbert $\mathbb{C}^{\ast}$-Modules, American Mathematical Society, Providence, R.I, 2005.
		
		\bibitem{Massit} Massit, H., Rossafi, M., Park, C. Some relations between continuous generalized frames. Afr. Mat. 35, 12 (2024). \\\url{https://doi.org/10.1007/s13370-023-01157-2}
		
		\bibitem{MbeO}M. Mbekhta, A. Ouahab, Op\'{e}rateur s-r\'{e}gulier dans un espace de Banach et th\'{e}orie spectrale., Acta Sci. Math. (Szeged) 59, 525-43(1994).
		
		\bibitem{Mohammad} M. Mohammadzadeh Karizaki, D.S. Djordjevi$\acute{c}$,  Commuting $ \mathbb{C}^{\ast}$-modular operators. Aequat. Math. 90,  (2016),1103-1114.
		
        \bibitem{SNG} J. S. Moghaddam, A. Najati, F. Ghobadzadeh, $\left( F,G\right)$-operator frames for $L\left( H,K\right) $, International Journal of Wavelets, Multiresolution
		and Information Processing, 2050031.
		(2020)
		
		\bibitem{ep}L.  Stephen, Campbell and D. Carl Meyer, EP Operators and Generalized Inverses,
		Canadian Mathematical Bulletin, Volume 18, Issue 3, August, 327 - 333 $\left( 1975\right) $
		
	
		
		\bibitem{Ra} V. Rakocevi$\grave{c}$, Generalized spectrum and commuting compact perturbations, Proc. Edinb. Math. Soc. 36, 197-209 (1993). 
		
			\bibitem{ROssafi}	M. Rossafi, S. Kabbaj, $\ast$-$K$-g-frames in Hilbert $\mathbb{C}^{\ast}$-modules, Journal of Linear and Topological Algebra Vol.07, No. 01, (2018), 63-71.
			
			\bibitem{salah}S. E. Oustani and  Azadeh Alijani, 	Continuous frames and Cauchy dual of closed range Operators, U.P.B. Sci. Bull., Series A, Vol. 85, Iss. 4, 2023.
		
		
		
		
		\bibitem{Pas}  W. Paschke, Inner product modules over $B^{\ast}$-algebras, Trans. Amer. Math. Soc., (182)(1973), 443-468.
		
		
		
		\bibitem{Rossafi} Rossafi, M., Nhari, FD., Park, C. et al. Continuous g-Frames with 
		$C^{\ast}$-Valued Bounds and Their Properties. Complex Anal. Oper. Theory 16, 44 (2022). \\\url{https://doi.org/10.1007/s11785-022-01229-4}
		
		\bibitem{Sharifi}	K. Sharifi, EP modular operators and their products, J. Math. Anal.	Appl., 419 (2014), pp. 870-877.
		
	
		
 
			\bibitem{decom} L. C. Zhang, The factor decomposition theorem of bounded generalized inverse modules and their topological
		continuity, J. Acta Math. Sin. {\bf 23} (2007), 1413-1418. 
		
		\bibitem{XQS}	Xu, Q.X., Sheng, L.J.: Positive semi-definite matrices of adjointable operators on Hilbert $\mathcal{C}^{\ast}$-modules.
		Linear Algebra Appl. 428, 992-1000(2008) 
		
		
\end{thebibliography}
\end{document}